\newcommand{\Sm}{\mathbf{Sm}}
\DeclareMathOperator{\DM}{DM}
\newcommand{\quot}[1]{\left[#1\right]}
\newcommand{\Bres}{\textup{Bar}}
\newcommand{\ft}{\textup{ft}}
\newcommand{\B}[1]{\textup{B}#1}
\newcommand{\KGL}{\textup{KGL}}
\newcommand{\SH}{\textup{SH}}
\newcommand{\KH}{\textup{KH}}
\newcommand{\Sc}{\mathbf{Sc}}
\newcommand{\Repc}{\mathbf{Rep}}
\newcommand{\iGrpd}{\infty\textup{-\textbf{Grpd}}}
\title{$T$-equivariant motives of flag varieties}
\author{Can Yaylali}
\begin{document}

\maketitle

\begin{abstract}
We use the construction of the stable homotopy category by Khan-Ravi to calculate the integral $T$-equivariant $K$-theory spectrum of a flag variety over an affine scheme, where $T$ is a split torus associated to the flag variety. More precisely, we show that the $T$-equivariant $K$-theory ring spectrum of a flag variety is decomposed into a direct sum of $K$-theory spectra of the classifying stack $\B{T}$ indexed by the associated Weyl group. We also explain how to relate these results to the motivic world and deduce classical results for $T$-equivariant intersection theory and $K$-theory of flag varieties.\par For this purpose, we analyze the motive of schemes stratified by affine spaces with group action, that preserves these stratifications. We work with cohomology theories, that satisfy certain vanishing conditions, which are satisfied for example by motivic cohomology and $K$-Theory.
\end{abstract}

\tableofcontents

\section{Introduction}

\subsection*{Motivation}
Let $G$ be a split reductive group over a field $k$ with split maximal torus $T$ contained in a Borel subgroup $B$ of $G$. The geometry of the flag variety $G/B$ plays an important role in representation theory and the Langlands program. One of the aspects is to analyze the $T$-equivariant cycles of Schubert cells. There are various results on the $T$-equivariant intersection theory of a flag variety (cf. \cite{Brion}) or even on the $T$-equivariant $K_{0}$ of it (cf. \cite{Uma}). For example, $A^{\bullet}_{T}(G/B)$ has an $A^{\bullet}_{T}(k)$-basis given by precisely the classes of the Schubert cells. Analogously the same is true for $K^{T}_{0}(G/B)$, i.e. the classes of the Schubert cells yield an $R(T)$-basis. There is no canonical way to imply the former via the latter, as the equivariant Chern character map fails to be an isomorphism without completion along the augmentation ideal (cf. \cite[Thm. 1.2]{KriRR}). But this result shows that after completion this was to be expected, also for higher $K$-theory.\par 
One idea to bypass this problem is via passage to the motivic theory, which implies both results simultaneously.
Motives were famously envisioned by Grothendieck. For any variety $X$ one should be able to encode the analogous behaviors of cohomology theories in an abelian category, the category of motives of $X$. In our context, the similar  behavior of $T$-equivariant $K$-theory and intersection theory of a flag variety should be seen motivically. This is the starting point of this article.

\subsection*{Some motivic background}

Defining a suitable abelian category of motives is a difficult task. One approach that has been studied over the years is to define the derived category of motives directly and attach a $t$-structure that recovers the (abelian) category of motives as the heart of this $t$-structure. There are many constructions of the derived category of motives by Voevodsky, Morel, Ayoub, Cisinki, D\'eglise, Spitzweck and more. Under certain assumptions, it is shown that the various definitions of the derived category of motives agree. Also, in the recent constructions, the category of motives comes equipped with a full $6$-functor formalism which has become a powerful tool in analyzing functorial properties of cohomology theories.\par  
 We will in particular follow the construction of Voevodsky-Morel (cf. \cite{MV1}). Roughly, for a scheme $S$ they define the stable homotopy category $\SH(S)$ as the category of simplicial Nisnevich sheaves over $\textup{Sm}_{S}$ with coefficients in $\ZZ$, where for any smooth scheme $X$ over $S$ one inverts the structure map $\AA^{1}_{X}\rightarrow X$ (resp. the map induced on the associated representable sheaves) and  inverts `tensoring' with $\PP_{S}^{1}$ (on simplicial Nisnevich sheaves there is a closed monoidal structure given by the smash product, cf. \textit{op.cit.}). Ayoub has shown in his thesis that the association $S\mapsto \SH(S)$ defines a functor that supports a full $6$-functor formalism, i.e. $\SH(S)$ is closed monoidal and for finite type morphisms $f\colon S'\rightarrow S$ there exist adjunctions of functors $f_{!}\dashv f^{!}$, $f^{*}\dashv f_{*}$ between $\SH(S')$ and $\SH(S)$ with various compatibilities (cf. \cite[Scholie 1.4.2]{ayoub}). If $S$ is regular over a field and we work with \'etale sheaves with $\QQ$-coefficients (this is usually denoted by $\SH_{\QQ,\et}(S)$), this is equivalent to the construction of Cisinksi-D\'eglise (cf. \cite[5.3.35, Thm. 16.1.4]{CD1}).\par  Voevodsky and Morel show that there exists an object $\KGL\in \SH(S)$, which we call the \textit{motivic $K$-theory spectrum}, such that $$\Hom_{\SH(S)}(1_{S}[n],\KGL) \cong K_{n}(S)$$ for any $n\in\ZZ$, where $1_{S}$ denotes the $\otimes$-unit in $\SH(S)$. Further, by the work of Spitzweck - which relies on the moving lemma proved for Bloch's cycle complex by Levine (cf. \cite[Prop. 1.3]{LevineKM}), we know that there exists an object $M\ZZ(n)\in\SH(S)$ such that $$\Hom_{\SH(S)}(1_{S},M\ZZ(n)[2n])\cong A^{n}(S),$$ at least when $S$ is a smooth scheme over a field (cf. \cite[Cor. 7.19]{Spitz}). So, working with objects in the stable homotopy category enables us the deduce results in $K$-theory resp. intersection theory. In this way, one can also define a derived category of motives as modules over a chosen ring object in $\SH(S)$. For example Spitzweck defines a derived category of motives $\DM(S)$ as $\SH(S)$-modules over $M\ZZ$ and Cisinksi-D\'eglise define $\DM(S,\QQ)$ as $\SH(S,\QQ)$-modules over $M\ZZ\otimes_{\ZZ}\QQ$ (here $\SH(S,\QQ)$ is the stable homotopy category associated to sheaves with rational coefficients). Thus, by understanding $\SH(S)$ resp. the corresponding sheaves represented by smooth $S$-schemes, we can understand their cohomological behavior and their behavior in motivic categories. \par
To further generalize these constructions to the equivariant setting one needs to be careful. Usually, this is done by working with quotient stacks and imposing \'etale descent on $\SH$. The idea then is to work with quotient stacks that can be smoothly covered by schemes and glue the corresponding motivic categories along the atlas.
A drawback of the gluing process is that we lose information on the $K$-theoretic side. By the works of Carlsson-Joshua and Khan-Ravi, one is able to see that the glued motivic $K$-theory spectrum does not represent genuine equivariant $K$-theory but rather its completed version (cf. \cite[Thm 1.2]{CJ} and \cite[Cor 7.1]{KR2}). This is due to the fact, that on algebraic stacks (in the sense of the \cite{stacks-project}) $K$-theory does not satisfy \'etale descent. For example, for a field $k$ we have $K_{\Gm}(\Spec(k)) = \ZZ[t,t^{-1}]$ (the $\Gm$-equivariant $K$-theory of $\Spec(k)$) but the simplicial limit along the smooth cover $\Spec(k)\rightarrow \B{\Gm}$ yields
$$
	\lim_{[n]\in \Delta}K(\Gm^{n}) = \ZZ[\![T]\!].
$$
One can compute this limit as the completion of $\ZZ[t,t^{-1}]$ along $(1-t)$, which is the ideal generated by the virtual rank $0$ bundles in $K_{\Gm}(\Spec(k))$.

\subsection*{Motives of flag varieties}
In this article we are interested in the $T$-equivariant motive of the flag variety $G/B$. Before we start to explain our results, let us recall some results in the non-equivariant setting. The scheme $G/B$ classifies complete flags on an $n$-dimensional representation $V$ of $G$, i.e. subspaces $V_{0}\subseteq V_{1}\subseteq \dots\subseteq V_{n}$, where each $V_{i}$ has dimension $i$. We will recall in Section \ref{sec.Bruhat} a well-known fact, that $G/B$ admits a finite stratification by affine spaces indexed by the Weyl group $W$ of $T$ in $G$. The strata are called Schubert cells, usually denoted by $C_{w}$ for a $w\in W$, and the closure of $C_{w}$ is called Schubert variety, usually denoted by $X_{w}$. The cohomology of these objects plays an important role in representation theory and enumerative geometry. In the former, the intersection cohomology of Schubert varieties can be related to so-called Kazhdan-Lusztig polynomials (cf. \cite{KL}). For the latter, one can see that the intersection ring of $G/B$ has a basis consisting of Schubert cells (cf. \cite[\S 14]{Fulton}). Fulton gives examples on how this result together with the multiplicative structure on Chow rings leads to solutions of enumerative problems  - this is also known as Schubert calculus (cf. \cite[\S 14.7]{Fulton}, \cite{Schubert}). More generally, instead of the intersection ring, one can do Schubert calculus on the Grothendieck ring of $G/B$ (cf. \cite{BrionLec} for an exposition).\par 
In recent years the use of $\AA^{1}$-homotopy theory became more apparent in enumerative geometry (e.g. \cite{LevPaul}). In that regard, one could ask if we can obtain a Schubert calculus on other $\AA^{1}$-cohomology theories. In fact, Hornbostel-Kiritchenko show that on algebraic cobordism, we can find a similar Schubert calculus as in the Chow group case (cf. \cite{Horn}). \par
As hinted in \cite[\S 4]{BrionLec}, such results are also interesting in the $T$-equivariant setting. At least for the $T$-equivariant Grothendieck ring, this was studied by Griffeth-Ram (cf. \cite{Griff}). In \textit{op.cit.} there is also a positivity conjecture (cf. \cite[Conj. 4.1]{Griff}), which was proven later by Anderson-Griffeth-Miller (cf. \cite[Cor. 5.1]{AGM}) generalizing the result of Graham on the Schubert calculus on the equivariant cohomology ring (cf. \cite[Cor. 4.1]{Graham}).

\subsection*{Back to our setting}
As we are interested in equivariant $K$-theory as an $R(T)$-module, where $R(T) = K^{T}_{0}(\Spec(k))$ is the representation ring of $T$, it suffices to look at our problem over the classifying stack of $T$, i.e. work with $p\colon \quot{T\bs G/B}\rightarrow\B{T}$. The benefit of this viewpoint is that we only have to deal with representable maps. Further, as $T$ is a split maximal torus, we know that the derived category of $\B{T}$ with quasi-coherent cohomology is compactly generated. This fact allows for a genuine construction of the stable homotopy category $\SH$ with a six-functor formalism on $\B{T}$ (cf. \cite[Thm. 1.1]{Hoy1}). This can be extended to relatively representable algebraic stacks over $\B{T}$ (cf. \cite{KR1}). In \textit{op.cit.} Khan-Ravi show, using the works of Hoyois, that there is a $K$-theory spectrum in $\SH(\B{T})$ that represents genuine equivariant $K$-theory.\par 
Using this version of the stable motivic homotopy category, we can analyze the `motive' of $ \quot{T\bs G/B}$ relative to $\B{T}$ with coefficients in an $E_{\infty}$-ring spectrum $M_{\B{T}}\in \SH(\B{T})$, i.e. $p_{!}p^{!}M_{\B{T}}$. Because of technical reasons, we have to assume that $M_{\B{T}}$ satisfies some vanishing condition, namely for all $n\geq0$ we have
\begin{equation}
\tag{$\ast$}
\label{eq.intro.ass}
		\Hom_{\SH(\B{T})}(1_{\B{T}}\langle n\rangle [-1],M_{\B{T}}) = 0,
\end{equation}
here $\langle n\rangle \coloneqq (n)[2n]$ denotes the Tate-twist by $n$ and shift by $2n$. This is not a drawback, as we will see in Example \ref{ex.vanish} that (\ref{eq.intro.ass}) is satisfied for motivic cohomology and $K$-theory.
 As in the intersection theory case, we can use the Bruhat decomposition of $G/B$ to stratify the flag variety via $T$-invariant affine cells and then  compute $p_{!}p^{!}M_{\B{T}}$ using this stratification. \par 
The theory of Khan-Ravi works even in the case where our base is not a field but an affine scheme. So, we will also prove our results in the most general case we are able to. Thus, from now on let $S$ be an affine scheme and $(G,B,T)$ be defined over $S$. First, we have to give a Bruhat decomposition in this setting (even though this is probably known to many people, we didn't find a reference and proved it by ourselves).

\begin{intro-proposition}[\protect{Prop. \ref{lem.GB.cellular}}]
	Let $S$ be a non-empty scheme (not necessarily affine). Let $G$ be a split reductive $S$-group scheme with split maximal torus $T$ and a Borel subgroup $B$ containing $T$. Then the $S$-scheme $G/B$ admits a cellular stratification indexed by the Weyl group of $T$ in $G$.
\end{intro-proposition}

Afterward, we analyze the motive of a proper scheme $X$ endowed with a group action of an $S$-group scheme $H$ and an $H$-invariant cellular decomposition. In the special case of $G/B$ with $T$-action this yields the structure of $p_{!}p^{!}M_{\B{T}}$ as a $M_{\B{T}}$-module with basis given by the classes of the Schubert cells.

\begin{intro-theorem}[\protect{Cor. \ref{cor.T-Flag}}]
\label{intro-thm.1}
	Let $G$ be a split reductive $S$-group scheme with maximal split torus $T$ that is contained in a Borel subgroup $B$. Then 
	$$
		p_{!}p^{!}M_{\B{T}}\simeq \bigoplus_{w\in W} M_{\B{T}}\langle l(w)\rangle,
	$$
	where $W$ is the Weyl group of $T$ in $G$.
\end{intro-theorem}

Applying this result with the representation of homotopy invariant $K$-theory in $\SH$, we get the decomposition of homotopy invariant $K$-theory.

\begin{intro-theorem}[\protect{Cor. \ref{cor.KH.flag}}]
\label{int.thm.2}
Let $S$ be an affine scheme. Further, let $G$ be a split reductive $S$-group scheme with maximal split torus $T$ that is contained in a Borel subgroup $B$. Then we have
$$
	\KH(\quot{T\bs G/B})\simeq \bigoplus_{w\in W} \KH(\B{T})
$$
\end{intro-theorem}

On the $0$-th homotopy group we recover an integral version of the classical result, that $K^{T}_{0}(G/B)$ as an $R(T)$-module is generated by the $T$-equivariant classes of the Schubert cells. For the higher equivariant $K$-groups, we get a similar statement.

\begin{intro-corollary}[\ref{eq.higher.K}]
\label{int.cor.1}
	Let $S$ be a Noetherian regular affine scheme. Further, let $G$ be a split reductive $S$-group scheme with maximal split torus $T$ that is contained in a Borel subgroup $B$. Let $R(T)$ denote the integral representation ring of $T$. Then we have an isomorphism of $R(T)$-modules
	$$
		K_{i}^{T}(G/B)\coloneqq K_{i}(\quot{T\bs G/B}) \cong \bigoplus_{w\in W} K^{T}_{i}(S).
	$$
\end{intro-corollary}
Now let us assume that $S=\Spec(k)$ is the spectrum of a field. On the higher homotopy groups, we similarly get an isomorphism of $R(T)_{\QQ}$-modules via tensoring with the higher $K$-groups of the ground field.

\begin{intro-corollary}[\ref{eq.Ki.flag}]
\label{int.cor.2}
Let $k$ be a field. Further, let $G$ be a split reductive $k$-group scheme with maximal split torus $T$ that is contained in a Borel subgroup $B$. Then we have an isomorphism of $R(T)_{\QQ}$-modules
$$
	K^{T}_{i}(G/B)_{\QQ}\cong K_{i}(k)_{\QQ}\otimes_{\QQ} K^{T}_{0}(G/B)_{\QQ}.
$$
\end{intro-corollary}
Using the universal property of $\SH$, we can also extend these results to the lisse-extension of $\SH$ and the \'etale localized rational stable homotopy category $\SH_{\QQ,\et}$ (we have to assert some conditions on the base as seen in Section \ref{sec.gen.motive}). In this way, we can extend Theorem \ref{intro-thm.1} to the case of Beilinson motives and recover the analogous result on $A^{*}_{T}(G/B)$ and completed $K_{0}$.

\begin{intro-proposition}[\ref{eq.Chow.flag} and \ref{eq.K.flag}]
\label{int.prop.1}
Let $S=\Spec(k)$ be the spectrum of a field. Further, let $G$ be a split reductive $S$-group scheme with maximal split torus $T$ that is contained in a Borel subgroup $B$. Then on completed equivariant $K$-theory, we have
	\begin{align*}
		 K_{0}^{T}(G/B)_{\QQ}^{\wedge I_{T}}&\cong K_{0}(G/B)_{\QQ}\otimes_{\QQ} K^{T}_{0}(S)_{\QQ}^{\wedge_{I_{T}}},
	\end{align*}
where $I_{T}$ is the ideal generated by virtual rank $0$-bundles in $R(T)_{\QQ}$. \par
On Chow rings, we recover
$$
	 A^{*}_{T}(G/B)_{\QQ}\cong A^{*}(G/B)_{\QQ} \otimes_{\QQ} A_{T}^{*}(S)_{\QQ}.
$$
\end{intro-proposition}

\subsection{Notation}
\subsubsection*{Categorical Notation}
In this paper, we will without further mention use the language of $\infty$-categories (cf. \cite{HTT}). We will identify $1$-categories with their Nerve and regard them as $\infty$-categories. In particular, we will identify the category of sets with the full sub $\infty$-category of $0$-truncated $\infty$-groupoids and the category of groupoids with the full sub $\infty$-category of $1$-truncated $\infty$-groupoids. Likewise, when we say \textit{full subcategory} of an $\infty$-category, we will always mean a full sub $\infty$-category. \par 
For the rest of this article, we fix an uncountable inaccessible regular cardinal $\kappa$ and \textit{small} will mean $\kappa$-small. Without further mention, if needed, we will assume smallness of the categories involved in this article. Indexing sets will always be small. We denote by $\ICat$ the $\infty$-category of small $\infty$-categories and by $\iGrpd$ the $\infty$-category of small $\infty$-groupoids.\par 
A \textit{presheaf} $\Fcal$ on an $\infty$-category $\Ccal$ is a functor $\Fcal\colon \Ccal^{\op}\rightarrow \iGrpd$. If $\Ccal$ admits a Grothendieck topology $\tau$, we will say that $\Fcal$ is a \textit{$\tau$-sheaf} if it is a sheaf with respect to the topology $\tau$. \par 

\subsubsection*{Algebraic geometric notation}
Let $S$ be a scheme. By an algebraic stack $X$ over $S$, we mean an \'etale-sheaf of groupoids on $S$-schemes, such that the diagonal of $X$ is representable by an algebraic space and there exists a scheme $U$ and a smooth effective epimorphism $U\rightarrow X$. A morphism of algebraic stacks over $S$ will always be an $S$-morphism.

\subsection*{Structure of this article}
In the first section, we recall some facts about the stable homotopy category in our setting after Khan-Ravi. Afterward, we prove some basic facts, we need later on.\par 
Our next step is to show the existence of the Bruhat decomposition of $G/B$ over arbitrary schemes. We then continue to compute the motive of strict linear schemes with group action and apply this to $\quot{T\bs G/B}$.\par 
We conclude this article, by applying our result to integral and rational homotopy invariant $K$-theory and their homotopy groups. Finally, we discuss how one extends these results to other motivic categories and get the classical results on Chow rings.

\subsection*{Acknowledgement}
I would like to thank Torsten Wedhorn for his remarks on the earlier versions of this article and for sketching me his idea on the Bruhat decomposition. Also, I would like to thank Simon Pepin Lehalleur, for pointing out an error in the first version of this manuscript and the discussion with him afterward, that led to the corrected version. Further, I would also like to thank Timo Richarz and Thibaud van den Hove for various discussions concerning this paper. \par 
This project was funded by the Deutsche Forschungsgemeinschaft (DFG, German Research Foundation) - project number 524431573, the Deutsche Forschungsgemeinschaft (DFG, German Research Foundation) TRR 326 \textit{Geometry and Arithmetic of Uniformized Structures}, project number 444845124 and by the LOEWE grant `Uniformized Structures in Algebra and Geometry'.


\section{Motivic setup}
In this section, we fix an affine scheme $S$, a split reductive $S$-group scheme $G$ together with a Borel pair $(B,T)$ consisting of a split maximal torus $T$ inside a Borel subgroup $B$ of $G$. Further, any algebraic stack will be considered as an algebraic stack over $S$ and any morphism will be relative over $S$.\par 
In this article, we want to compute the motive of $T$-equivariant flag varieties. In particular, we are interested in motives of Artin stacks. There are several approaches on how to extend the theory of motives to Artin stacks. Recall from the introduction that one can right Kan extend $\SH_{\QQ,\et}$ from schemes to Artin stacks and get a full $6$-functor formalism. This works fine until one wants to compute the motivic cohomology in terms of K-theory. One can show that $K$-theory does not satisfy \'etale descent for Artin stacks\footnote{In fact, $G$-theory satisfies descent precisely for \'etale covers that are ``isovariant'' (cf. \cite[Thm. 1.1]{Jo1}).}. For example, let $\quot{X/G}$ be a smooth Artin stack over a field $k$ with $G$ split reductive. Then we have a map $K(\quot{X/G})\rightarrow K^{\et}(\quot{X/G})$, where $K^{\et}$ is the right Kan extended $K$-theory from algebraic spaces to \'etale sheaves. This map is not an equivalence but realizes $K^{\et}_{0}(\quot{X/G})$ as the completion of $K^{G}_{0}(X)$ along the augmentation ideal $I_{G}\subseteq R(G)= K(\textup{Rep}(G))$ (note that $K^{G}_{0}(X)$ is in general not $I_{G}$-complete as seen for the $\Gm$-equivariant $K$-theory of a point, c.f. Example \ref{ex.K.comp}). This is an instance of the comparison between the Borel construction for $K$-theory and equivariant $K$-theory (cf. \cite[Thm. 1.3]{Kri}). \par
Also for non-rational coefficients, one has to be careful as \'etale descent is not even satisfied for schemes. Hence, one has to be careful to construct a full $6$-functor formalism for $\SH$ with non-rational coefficients. This was done by Chowdhury in his thesis by gluing along smooth morphism with Nisnevich local sections (cf. \cite[Thm. 1.0.1]{Chowd}). This is equivalent to gluing along smooth covers, the so-called \textit{lisse-extension} (cf. \cite[Cor. 12.28]{KR1}). But again, computing the motivic cohomology along the lisse-extended $K$-theory spectrum yields the completion of $K$-theory along the augmentation ideal (cf. \cite[Ex. 12.22]{KR1}).\par 
In the case of $\Xcal\coloneqq \quot{T\bs G/B}$ there is a construction by Khan-Ravi of a stable homotopy category $\SH(\Xcal)$  that admits a full $6$-functor formalism and a motivic spectrum $\KGL_{\Xcal}\in\SH(\Xcal)$ such that 
$$
	\Hom_{\SH(\Xcal)}(1_{\Xcal},\KGL_{\Xcal}) = \KH(\Xcal),
$$
where $\KH(\Xcal)$ denotes the homotopy invariant $K$-theory of $\Xcal$ (cf. \cite[Const. 10.3]{KR1}). The quotient stack $\Xcal$ belongs to a certain class of algebraic stacks, called \textit{scalloped} (see below) for which the stable homotopy category is also defined.\par 
In the end of this article, we will look at the implications on motivic cohomology in various frameworks (cf. Section \ref{sec.gen.motive}).  

\subsection{Scalloped stacks}
We recall the necessary definitions from \cite[\S 2]{KR1}. We will use the terminology of \textit{loc.cit.}.

\begin{defi}
Let $H$ be a group scheme over $S$. We say that $H$ is \textit{nice} if it is an extension of an \'etale group scheme of order prime to the residue characteristics of $S$, by a group scheme of multiplicative type.
\end{defi}

\begin{example}[cf. \protect{\cite[Rem. 2.2]{AHR1}}]
	An important example of a nice group scheme is a torus. One can show that any nice group scheme is linearly reductive. If $S$ is the spectrum of a field of characteristic $p>0$, then linearly reductive group schemes are also nice.
\end{example}

Let $H$ be a nice $S$-group scheme and $X$ a quasi-affine scheme with action by $H$. Hoyois constructs an equivariant version of the stable homotopy category $\SH^{H}(X)$ with full $6$-functor formalism in this context (cf. \cite[Thm. 1.1]{Hoy1}). Khan and Ravi extend this construction via gluing along Nisnevich squares to a class of algebraic stacks, which they call \textit{scalloped}. We don't want to give an explicit definition of a scalloped stack, as it is a bit technical, but give an important example and some properties of scalloped stacks, for the definition and details we refer to \cite[\S 2]{KR1}. 

\begin{prop}[\protect{\cite[Cor 2.13, Thm. 2.14]{KR1}}]
$ $
\begin{enumerate}
	\item[(i)] Let $f\colon X'\rightarrow X$ be a morphism of qcqs algebraic stacks. If $X$ is scalloped and $f$ is representable, then $X'$ is scalloped.
	\item[(ii)] Let $X$ be a qcqs algebraic space over $S$ with $H$-action, where $H$ is a nice $S$-group scheme. Then $\quot{X/H}$ is scalloped.
\end{enumerate}
\end{prop}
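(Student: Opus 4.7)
The plan is to unwind the definition of a scallop decomposition from \cite{KR1}: a qcqs algebraic stack $\mathcal{Y}$ is scalloped when it admits a finite filtration by open substacks $\emptyset=\mathcal{U}_{0}\subset\mathcal{U}_{1}\subset\cdots\subset\mathcal{U}_{n}=\mathcal{Y}$ where each step $\mathcal{U}_{i-1}\hookrightarrow\mathcal{U}_{i}$ sits in a Nisnevich square whose remaining vertex is of the form $[W/H]$ with $W$ quasi-affine over $S$ and $H$ a nice $S$-group scheme. Both assertions will reduce to producing such a filtration, either by pulling one back or by building one locally.

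For (i), first I would pull back the scallop decomposition of $X$ along $f$ to obtain an open filtration $\mathcal{U}_{i}':=\mathcal{U}_{i}\times_{X}X'$ of $X'$. Nisnevich squares are stable under arbitrary base change, and the identity $[W_{i}/H_{i}]\times_{X}X'\simeq[(W_{i}\times_{X}X')/H_{i}]$ shows that each pulled-back square is a quotient of an algebraic space by the \emph{same} nice group $H_{i}$; representability of $f$ is what guarantees that $W_{i}\times_{X}X'$ is an algebraic space at all, and the affineness condition required by the definition can be restored, if necessary, by passing to a Nisnevich refinement by quasi-affines. This exhibits the pulled-back data as a scallop decomposition of $X'$.

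For (ii), the strategy is to invoke the local structure theorem of Alper--Hall--Rydh: since $H$ is nice and acts on an algebraic space $X$, every point of $\quot{X/H}$ has nice stabilizer, and around each such point one obtains a representable Nisnevich neighborhood of the form $[U/H']$ with $U$ quasi-affine over $S$ and $H'\subset H$ a nice closed subgroup. Quasi-compactness of $\quot{X/H}$ lets us extract a finite cover $\mathcal{V}_{1},\dots,\mathcal{V}_{n}$, and setting $\mathcal{U}_{i}:=\mathcal{U}_{i-1}\cup\mathcal{V}_{i}$ assembles the desired filtration, the Nisnevich square at the $i$-th stage coming directly from the covering property of $\mathcal{V}_{i}\to\mathcal{U}_{i}$. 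The main obstacle is in (ii): one must argue that the AHR-neighborhoods can genuinely be taken to be Nisnevich (not merely \'etale) with a quasi-affine upper-right vertex, which is where niceness of the stabilizers is essential; once that input is secured, the inductive assembly of the scallop decomposition from the finite cover is formal.
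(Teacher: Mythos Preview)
The paper does not prove this proposition at all: it is stated as a citation of \cite[Cor.~2.13, Thm.~2.14]{KR1} and left without argument, since the author is merely recalling the formalism needed later. So there is no ``paper's own proof'' to compare against; your proposal is an attempt to reconstruct the argument from the cited source.

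That said, your sketch is broadly on target with how the proof in \cite{KR1} actually runs. A couple of remarks. In (i), pulling back a scallop decomposition along a representable $f$ gives Nisnevich squares whose new vertex is $[W_i\times_X X'/H_i]$ with $W_i\times_X X'$ a qcqs algebraic space, but typically \emph{not} quasi-affine; your phrase ``restored, if necessary, by passing to a Nisnevich refinement'' hides the fact that this step is essentially an application of (ii) (or rather its method) to that algebraic space with $H_i$-action. So (i) is not quite independent of (ii), and the cleaner route---which is what \cite{KR1} does---is to prove (ii) first and then observe that (i) follows because each pulled-back piece is again of the form covered by (ii). In (ii), you have correctly isolated the crux: the Alper--Hall--Rydh local structure theorem, and specifically the upgrade from \'etale to Nisnevich neighborhoods in the presence of linearly reductive (here: nice) stabilizers. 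Once that is granted, the assembly of a finite filtration from a finite Nisnevich cover is indeed formal.
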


	Throughout this article, we are interested in quotients of qcqs schemes by the torus $T$, such as $\quot{T\bs G/B}$. The above proposition tells us that these stacks are scalloped. In particular, we can work with the formalism of \cite{KR1}.
	
\begin{notation}
		We set $\Repc_{S}$ to be the $\infty$-category of morphisms $X'\rightarrow X$ of algebraic stacks over $S$ that are representable. We denote by $\Repc_{S}^{\ft}$ the full subcategory of $\Repc_{S}$ consisting of morphisms of finite type over $S$. Further, we denote by $\Sc_{S}$ resp. $\Sc_{S}^{\ft}$ the full subcategories of $\Repc_{S}$ resp. $\Repc_{S}^{\ft}$ consisting of scalloped stacks.
\end{notation}

\subsection{The stable homotopy category for scalloped stacks}
Let us quickly recall the construction of $\SH$ for scalloped stacks from \cite{KR1}. For a scalloped stack $X$ let us set $\Sm_{X}$ as the full subcategory of $(\Repc_{S})_{/X}$ consisting of morphisms $X'\rightarrow X$ that are smooth and representable.\par
 
We define the homotopy category $\textup{H}(X)$ of $X$, as the $\infty$-category of Nisnevich sheaves $F$ from $\Sm_{X}$ to $\iGrpd$ that are homotopy invariant, i.e. for any $X'\in \Sm_{X}$ and any vector bundle $p\colon V\rightarrow X'$, we have that the induced map $F(X')\rightarrow F(V)$ is an equivalence. In the classical motivic theory, for example of Cisinski-D\'eglise, one obtains the stable homotopy category by adjoining $\otimes$-inverses of Thom-motives of finite locally free sheaves. In our case, we can associate to any finite locally free module $\Ecal$ over $X$ an object $\langle\Ecal\rangle\in \textup{H}(X)$, called the \textit{Thom-anima} (cf. \cite[\S 4]{KR1}). Now we obtain the stable homotopy category, by formally $\otimes$-inverting these Thom-anima. One should note that formal $\otimes$-inversion of objects in $\infty$-categories is more delicate and we refer to \textit{op.cit.} for references and details.

\begin{defi}[\cite{KR1}]
	Let $X$ be a scalloped algebraic stack. The \textit{stable homotopy category of $X$} is defined as the $\infty$-category
	$$
		\SH(X)\coloneqq \textup{H}(X)[\langle\Ecal\rangle^{\otimes -1}],
	$$
	where $[\langle\Ecal\rangle^{\otimes -1}]$ denotes the formal inversion of all Thom-anima associated to any finite locally free module $\Ecal$ over $X$.
\end{defi}

The most important feature of the stable homotopy category for us is that the assignment $X\mapsto \SH(X)$ can be upgraded to a functor with a full $6$-functor formalism, that satisfies homotopy invariance and yields a localization sequence. Further, homotopy invariant $K$-theory resp. motivic cohomology of $X$ can be represented by objects in $\SH(X)$.\par 
Let us quickly recall the $6$-functor formalism for scalloped stacks on the stable homotopy category $\SH$. We also recall the comparison with $K$-theory.

\begin{thm}[\protect{\cite{KR1}}]
\label{thm.6.ff}
	For any scalloped stack $X$ there is an $\infty$-category $\SH(X)$ with the following properties
\begin{enumerate}
	\item[(i)] $\SH(X)$ is a stable, presentable, closed symmetric monoidal $\infty$-category. The tensor product is colimit preserving and the inner $\Hom$ will be denoted by $\Homline$. The $\otimes$-unit will be denoted by $1_{X}$.
	\item[(ii)] The assignment $X\mapsto \SH(X)$ can be upgraded to a presheaf of symmetric monoidal presentable $\infty$-categories with colimit preserving functors on the site of scalloped stacks
	$$
		\SH^{*}\colon (\Sc_{S})^{\op}\rightarrow \ICat^{\otimes},\ X\mapsto \SH(X),\ f\mapsto f^{*}.
	$$
	  For any morphism $f\colon X\rightarrow Y\in\Sc_{S}$, there is an adjunction
	$$
		\begin{tikzcd}
			 f^{*}\colon \SH(Y)\arrow[r,"",shift left = 0.3em]&\arrow[l,"",shift left = 0.3em]\SH(X)\colon f_{*}.
		\end{tikzcd}
	$$
	\item[(iii)] (Homotopy invariance) For every vector bundle $p\colon V\rightarrow X$ of scalloped stacks, the unit of the $*$-adjunction
	$$
		1\rightarrow p_{*}p^{*}
	$$
	is an equivalence.
	\item[(iv)] If $f\in\Sc_{S}$ is smooth morphism, then $f^{*}$ has a left adjoint, denoted $f_\sharp$ that is a morphism of $\SH(Y)$-modules.
	\item[(v)] The assignment $X \mapsto \SH(X)$ can be upgraded to a presheaf of presentable $\infty$-categories
$$\SH: (\Sc_{S}^{\ft})^{\op} \rightarrow \ICat,\ X \mapsto \SH(X),\ f \mapsto f^!$$ from the $\infty$-category of scalloped stacks with finite type representable morphisms.
For each $f\colon X\rightarrow Y$ in $\Sc_{S}^{\ft}$, there is an adjunction
$$f_! : \SH(X) \rightleftarrows \SH(Y): f^!.$$
For any factorization $f = p \circ j$ with $j$ an open immersion and $p$ a proper representable map, there is a natural equivalence $f_! \cong p_* j_\sharp$.
	\item[(vi)] (Localization) If $i\colon Z\rightarrow X$ is a closed immersion of scalloped stacks with open complement $j\colon U\rightarrow X$, then we have the following cofiber sequences
	\begin{align*}
		j_{\sharp}j^{*}\rightarrow \id \rightarrow i_{*}i^{*}\\
		i_{!}i^{!}\rightarrow \id \rightarrow j_{*}j^{*}.
	\end{align*}
	\item[(vii)] There is a map $K(X)\rightarrow \Aut(\SH(X))$, assigning for any $\alpha \in K(X)$ its twist $\langle \alpha \rangle$. If $\alpha$ is given by a finite locally free sheaf $\Ecal$, then $\langle \Ecal \rangle = p_{\sharp}s_{*}1_{X}$ (this agrees with the previously considered Thom-anima), where $p\colon V(\Ecal)\rightarrow X$ is the projection of the associated vector bundle and $s$ its zero section. Further, any of the $6$-operations commute with $\langle \alpha\rangle$ in a suitable sense (cf. \cite[Rem. 7.2]{KR1}). We set $\langle n\rangle \coloneqq \langle \Ocal_{X}^{n}\rangle$.
	\item[(viii)] The canonical projection $p\colon \Gm\times X\rightarrow X$ yields a morphism $p_{\sharp}p^{*}1_{X}[-1]\rightarrow 1_{X}[-1]$ whose fiber we denote $1_{X}(1)$. For an $M\in\SH(X)$, we denote its $n$-Tate twist by $M(n)\coloneqq M\otimes 1_{X}(1)^{\otimes n}$. We have $\langle n\rangle \simeq (n)[2n]$. 
	\item[(ix)] (Purity) Let $f$ be a smooth representable morphism of scalloped algebraic stacks with cotangent complex $L_{f}$, then 
		$$
			f^{!}\simeq f^{*}\langle L_{f}\rangle.
		$$
	\item[(x)] For a cartesian diagram in $\Sc_{S}^{ft}$
		$$
			\begin{tikzcd}
				W\arrow[r,"g'"]\arrow[d,"f'"]& X\arrow[d,"f"]\\
				Y\arrow[r,"g"]&Z
			\end{tikzcd}
		$$
		with $g\in\Sc_{S}^{\ft}$, we have 
		\begin{align*}
			g^! f_* & \xrightarrow{\simeq} f'_* g'^!, \\
			f^* g_! & \xrightarrow{\simeq} g'_! f'^*.
		\end{align*}
		\item[(xi)]
			For $f\colon Z\rightarrow Y$ in $\Sc_{S}^{\ft}$, the functor $f_{!}$ satisfies the projection formulas (cf. \cite[Thm. 7.1]{KR1}).
		\item[(xii)] There exists an $E_{\infty}$-ring spectrum $\KGL_{X}\in \SH(X)$ such that 
		$$
			\KH(X) =\Homline_{\SH(X)}(1_{X},\KGL_{X}),
		$$
		that is functorial in smooth representable morphisms and satisfies Bott periodicity for twist by finite locally free sheaves (cf. \cite[Thm. 10.7]{KR1}).
\end{enumerate}
\end{thm}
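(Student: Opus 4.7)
Because Theorem \ref{thm.6.ff} is a recollection of the main construction of Khan--Ravi, my plan is to sketch how the six-functor formalism on scalloped stacks is assembled rather than re-prove each item. The strategy divides into two stages: first, invoke Hoyois's equivariant stable homotopy category $\SH^{H}(X)$ for a quasi-affine $S$-scheme $X$ equipped with an action of a nice $S$-group scheme $H$, where the full formalism already lives; second, extend along the defining Nisnevich atlas of a scalloped stack.

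At the level of quotient stacks $[X/H]$, essentially everything in items (i)--(xi) is established by Hoyois as a transport of Ayoub's formalism to the equivariant setting. The symmetric monoidal and adjoint structure (i), (ii), (iv), (v) is a direct consequence of the construction via $\otimes$-inversion of Thom-anima. Homotopy invariance (iii) is built into the definition of $\textup{H}(X)$. Localization (vi) is the recollement between open and closed inclusions. The twists in (vii) and the Tate motive in (viii) arise tautologically from $p_{\sharp} s_{*} 1_{X}$ and the fiber of the $\Gm$-unit. Smooth purity (ix) is proved via deformation to the normal cone, and the comparison $\langle n\rangle\simeq(n)[2n]$ together with the orientation by the motivic Thom spectrum gives the final identification. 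Base change (x) and the projection formula (xi) are formal once $f_{!}$ is constructed as $p_{*}j_{\sharp}$ for any factorization of $f$ into an open immersion followed by a proper map.

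For the passage from quotient stacks to arbitrary scalloped stacks, the plan is to right Kan extend $\SH$ along the atlas coming from the scalloped structure. Since this atlas is built from iterated Nisnevich squares whose strata are quotient stacks of the above form, each item reduces to a chart-level statement provided every $6$-operation and every twist commutes with Nisnevich descent --- this is the content of checking that the formalism is a sheaf of $\infty$-categories valued in $\ICat$. The main obstacle is then part (xii): producing $\KGL_{X}\in\SH(X)$ that represents genuine $\KH(X)$ and not its completed variant. On a chart $[X/H]$ one takes Hoyois's equivariant $\KGL$; for the gluing one must verify Nisnevich descent of $\KH$ in the scalloped setting, using Thomason--Trobaugh together with its equivariant enhancement. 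Bott periodicity for twists by finite locally free sheaves then follows from the orientation in (vii)--(viii) together with a projective bundle formula checked chart by chart.
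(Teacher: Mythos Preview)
The paper itself offers no proof of this theorem: it is recorded as a quotation from \cite{KR1}, with the preceding paragraphs recalling the definition of $\SH(X)$ but deferring all verifications to that reference. Your sketch is therefore already more than the paper provides, and it is a faithful outline of the Khan--Ravi strategy.

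One small correction is worth noting. The passage from quotient charts $[U/H]$ to arbitrary scalloped stacks is not literally carried out by right Kan extending $\SH$ along an atlas. Instead, for any scalloped $X$ the category $\SH(X)$ is defined \emph{directly} as the $\otimes$-stabilization of homotopy-invariant Nisnevich sheaves on $\Sm_{X}$ (smooth representable stacks over $X$); one then \emph{proves} that the resulting presheaf $X\mapsto \SH(X)$ satisfies Nisnevich descent, so that on a scallop decomposition it agrees with the limit of the chart categories. This is more than a cosmetic point for item (xii): $\KGL_{X}$ is built globally in $\SH(X)$, and the identification with $\KH(X)$ proceeds by checking that both sides satisfy Nisnevich descent on scalloped stacks, rather than by gluing chartwise spectra.
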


In the rest of this article, we want to focus on modules over an $E_{\infty}$-ring spectrum $M\in\SH(X)$, where $X$ is scalloped. The reason is that we will need a vanishing assumption (see below) that is satisfied for example for the homotopy invariant $K$-theory spectrum. As we are interested in $T$-equivariant $K$-theory of the flag variety, this is not a strong restriction. \par 
For oriented cohomology theories, we can relax our situation from flag varieties to linear schemes. To be more precise, the flag variety is stratified by affine spaces. If we are interested in oriented cohomology theories, it is enough to consider objects that are stratified by vector bundles. The next example will show how this idea works.\par 
But before we come to the example let us fix some notation.

\begin{notation}
	Let $X$ be a scalloped algebraic stack and $M_{X}\in\SH(X)$ an $E_{\infty}$-ring spectrum. Then we denote the $\infty$-category of $M_{X}$-modules in $\SH(X)$ by $\SH(X)_{M}$. Further, for any representable morphism $f\colon Y\rightarrow X$  of scalloped algebraic stacks, we denote $M_{Y}\coloneqq f^{*}M_{X}$.
\end{notation}

\begin{rem}
\label{rem.6.ff.mod}
Let $f\colon X\rightarrow Y$ be a representable morphism of scalloped algebraic stacks. Further, let $M_{Y}\in \SH(Y)$ be an $E_{\infty}$-ring spectrum. Tensoring with $M_{Y}$ (resp. $M_{X}$) induces a pullback functor $f^{*}_{M}\colon \SH(Y)_{M}\rightarrow \SH(X)_{M}$. As the $*$-pullback for $\SH$ is monoidal, we see that its right adjoint is lax-monoidal. In particular, we get an adjunction 
$$
	 \begin{tikzcd}
		f^{*}_{M}\colon \SH(Y)_{M}\arrow[r,"",shift left = 0.3em]&\arrow[l,"",shift left = 0.3em]\SH(X)_{M}\colon f_{*M}.
	\end{tikzcd}
$$
As remarked in Theorem \ref{thm.6.ff} (iv), if $f$ is smooth, the left adjoint $f_{\sharp}$ of $f^{*}$ is a morphism of $\SH(Y)$-modules and in particular, induces a left adjoint $f_{\sharp M}$ of $f^{*}_{M}$ (cf. \cite[\S 7.2]{CD1}). \par
	By conservativity of the forgetful-functor $\SH_{M}\rightarrow \SH$ (here we see $\SH_{M}$ as a functor from scalloped algebraic stacks with representable morphisms to symmetric monoidal presentable $\infty$-categories), we can use \cite[Thm 7.1]{KR1} to obtain a $6$-functor formalism on $\SH_{M}$ that satisfies properties (i)-(xi) of Theorem \ref{thm.6.ff}.
\end{rem}

\begin{notation}
	In the rest of this article, we will work with module spectra over some fixed $E_{\infty}$-ring spectrum. Thus, we will drop the subscript in the $6$-functor formalism indicating the fixed $E_{\infty}$-ring spectrum as seen in Remark \ref{rem.6.ff.mod}.
\end{notation}

\begin{defi}
	Let $f\colon X\rightarrow Y$ be a representable morphism of finite type of scalloped algebraic stacks. Further, let $M_{Y}\in \SH(Y)$ be an $E_{\infty}$-ring spectrum in $\SH(Y)$. Then we define \textit{motive of $X$ with values in $M_{Y}$} (resp. the \textit{compactly supported motive of $X$ with values in $M_{Y}$}) as $M_{Y}(X)\coloneqq f_{!}f^{!}M_{Y}$ (resp. $M_{Y}^{c}(X)\coloneqq f_{*}f^{!}M_{Y}$) in $\SH(Y)_{M}$. 
\end{defi}

\begin{example}
\label{ex.motive.of.vb}
	Let $\Ecal$ be a finite locally free sheaf on a scalloped stack $X$ and let $p\colon V(\Ecal)\rightarrow X$ be the associated vector bundle. Further, let $M_{X}\in\SH(X)$ be an $E_{\infty}$-ring spectrum that admits an orientation, i.e. there is a functorial equivalence $M_{X}\langle \Fcal\rangle \simeq M_{X}\langle n\rangle$ for any finite locally free $\Ocal_{X}$-module $\Fcal$ of rank $n$. Note that by homotopy invariance, we have that $p_{*}p^{*}\simeq \id\simeq p_{\sharp}p^{*}\in \SH(X)_{M}$. In particular, by purity we have that $M_{X}(V(\Ecal))$ and $M^{c}_{X}(V(\Ecal))$ are equivalent to $M_{X}\langle \Ecal\rangle$. Further, as we can orient the unit and by the Mayer-Vietoris sequence\footnote{Note that $f_{*}f^{*}$ and $f_{!}f^{!}$ satisfy Nisnevich descent and thus yield a Mayer-Vietoris sequence for $M_{X}$ and $M_{X}^{c}$, i.e. for open substacks $U,U'\subseteq V(\Ecal)$, we have a fiber sequence of the form
	$$
		M_{X}(U\cap U')\rightarrow M_{X}(U)\oplus M_{X}(U')\rightarrow M_{X}(V)
	$$ and similarly, for $M_{X}^{c}$} we have that $M_{X}(V(\Ecal))\simeq\bigoplus_{n\in \ZZ}M_{X}\langle \Ecal_{n}\rangle$, where the $\Ecal_{n}$ are the restrictions of $\Ecal$ to the open and closed locus $X_{n}$ such that $\Ecal_{|X_{n}}$ has rank $n$. Further, as $M_{X}$ admits an orientation, we have  $M_{X}\langle \Ecal_{n}\rangle \simeq M_{X}\langle n \rangle$.\par
	Now let $X$ be a qcqs algebraic space over $S$ and $\Ecal$ be a finite locally free sheaf on $X$. Further, let $G$ be a nice $S$-group scheme acting on $X$ such that $\Ecal$ is $G$-equivariant. This yields a vector bundle over $\quot{X/G}$, i.e. the there is a finite locally free sheaf $\Ecal_{G}$ on $\quot{X/G}$ such that $\quot{V(\Ecal)/G}\cong V(\Ecal_{G})$. As $X$ is quasi-compact, we can find a finite set $I_{\Ecal}\subseteq \NN_{0}$ and an open and closed cover $(X_{i})_{i\in I_{\Ecal}}$ of $X$ such that for any $i\in I_{\Ecal}$ the sheaf $\Ecal_{|X_{i}}$ is finite locally free of rank $i$ and $\Ecal=\bigoplus_{i\in I_{\Ecal}} \Ecal_{i}$. As the trivial bundle $X\rightarrow \quot{X/G}$ yields an atlas, we see that the same holds true for $\Ecal_{G}$. In particular, as we have seen above this yields $M_{\quot{X/G}}(V(\Ecal_{G}))\simeq M_{\quot{X/G}}\langle \Ecal_{G}\rangle = \bigoplus_{i\in I_{\Ecal}}M_{\quot{X/G}}\langle i\rangle$.
\end{example}

\begin{lem}[Localization sequence]
\label{lem.loc.for.stacks}
	Let $f\colon X\rightarrow Y$ be a representable morphism of scalloped algebraic stacks over $S$ of finite type. Further, let $M_{Y}\in \SH(Y)$ be an $E_{\infty}$-ring spectrum. Let $i\colon Z\hookrightarrow X$ be a closed immersion over $Y$ with open complement $j\colon U\rightarrow  X$. Further, let us denote $f_{0}\coloneqq f\circ j$ and $\fbar\coloneqq f\circ i$. Then for any $M_{X}$-module $N$ in $\SH(X)$ there exists the following fiber sequence in $\SH(Y)_{M}$
	$$
		\fbar_{*}\fbar^{!}N\rightarrow f_{*}f^{!}N\rightarrow f_{0*}f_{0}^{!}N.
	$$
\end{lem}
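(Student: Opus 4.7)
The plan is to deduce the desired fiber sequence directly from the localization sequence of endofunctors on $\SH(X)$ supplied by Theorem~\ref{thm.6.ff}(vi), namely
$$
    i_{!}i^{!} \longrightarrow \id_{\SH(X)} \longrightarrow j_{*}j^{*}.
$$
First I would evaluate this fiber sequence of natural transformations at the object $f^{!}1_{Y} \in \SH(X)$, obtaining a fiber sequence
$$
    i_{!}i^{!}f^{!}1_{Y} \longrightarrow f^{!}1_{Y} \longrightarrow j_{*}j^{*}f^{!}1_{Y},
$$
and then apply $f_{*}$. Since $\SH(Y)$ is stable and $f_{*}$ is a right adjoint, it preserves fiber sequences, so the result is still a fiber sequence in $\SH(Y)$.

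The remaining work consists in identifying the two outer terms with $\bar{f}_{*}\bar{f}^{!}1_{Y}$ and $f_{0*}f_{0}^{!}1_{Y}$ respectively. On the left, $i$ is a closed immersion, hence proper and representable, so applying Theorem~\ref{thm.6.ff}(v) to the trivial factorization $i = i \circ \id$ gives $i_{!} \simeq i_{*}$. Functoriality of the $*$-pushforward then yields $f_{*}i_{!} \simeq f_{*}i_{*} \simeq (f \circ i)_{*} = \bar{f}_{*}$, while functoriality of $(-)^{!}$ gives $i^{!}f^{!} \simeq (f \circ i)^{!} = \bar{f}^{!}$. On the right, functoriality of $(-)_{*}$ gives $f_{*}j_{*} \simeq f_{0*}$, and the remaining identification $j^{*}f^{!} \simeq f_{0}^{!}$ reduces to the equivalence $j^{*} \simeq j^{!}$ for the open immersion $j$. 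This in turn follows from the purity theorem (Theorem~\ref{thm.6.ff}(ix)): $j$ is smooth and representable with trivial cotangent complex $L_{j} \simeq 0$, so the twist $\langle L_{j}\rangle$ is the identity. Chaining these identifications together transforms the fiber sequence above into the desired
$$
    \bar{f}_{*}\bar{f}^{!}1_{Y} \longrightarrow f_{*}f^{!}1_{Y} \longrightarrow f_{0*}f_{0}^{!}1_{Y}.
$$

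The main obstacle is not conceptual: once the six-functor formalism of Khan-Ravi for scalloped stacks is granted, the argument is a purely formal diagram chase. What does need explicit verification is that each of the inputs---composition-compatibility of $(-)_{*}$ and $(-)^{!}$, the identification $i_{!} \simeq i_{*}$ for proper representable maps, and purity (and hence $j^{*} \simeq j^{!}$) for open immersions---is actually available in the scalloped setting of \cite{KR1}. Since all three are encoded in Theorem~\ref{thm.6.ff}, the lemma should follow immediately once these are invoked.
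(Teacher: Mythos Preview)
Your proposal is correct and follows essentially the same route as the paper: apply the localization triangle $i_!i^!\to\id\to j_*j^*$ to $f^!$, identify $i_!\simeq i_*$ and $j^*\simeq j^!$, and then postcompose with $f_*$. The only difference is that the lemma is stated as a fiber sequence of endofunctors, so you should evaluate at $f^!M$ for an arbitrary $M\in\SH(Y)$ rather than at $f^!1_Y$; the paper does this, and your argument goes through verbatim with $1_Y$ replaced by $M$.
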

\begin{proof}
	 Applying the localization sequence 
	$$
		i_{*}i^{!}=i_{!}i^{!}\rightarrow \id\rightarrow j_{*}j^{*}=j_{*}j^{!}
	$$
	to $f^{!}N$ yields
	$$
		i_{*}\fbar^{!}N\rightarrow f^{!}N\rightarrow j_{*}f_{0}^{!}N.
	$$
	Now applying $f_{*}$ to this sequence yields the result.
\end{proof}

We will need the following vanishing assumption later on in this article. In motivic cohomology, this is the analog of the vanishing of negative higher Chow groups. For the $K$-theory spectrum, this will follow from the vanishing of negative $K$-groups.

\begin{assumption}
	\label{ass.vanish}
	Let $X=\B{H}$ be the classifying stack of a nice group scheme $H$ and $M_{X}\in \SH(X)$ be an $E_{\infty}$-ring spectrum. Further, let $n> 0$, then we have
	$$
		\Hom_{\SH(X)}(1_{X}\langle n\rangle [-1],M_{X}) = 0.
	$$
\end{assumption}

Assumption \ref{ass.vanish} is satisfied at least for the two cohomology theories, that are considered in this article. Namely, homotopy invariant $K$-theory and motivic cohomology.

\begin{example}
\label{ex.vanish}
Assume $S$ is Noetherian. Let $m<0$ and $n\in \ZZ$ and $X=\B{H}$ the classifying stack for a nice group scheme $H$. Further, let us consider the $K$-theory spectrum $\KGL_{X}\in \SH(X)$. Then we have 
	$$
		\Hom_{\SH(X)_{\KGL}}(\KGL_{X}\langle n\rangle [m],\KGL_{X}) \simeq \Hom_{\SH(X)_{\KGL}}(\KGL_{X}[m],\KGL_{X}) \simeq\pi_{m}\KH(X),
	$$
	where the first equivalence follows from Bott-periodicity (cf. Theorem \ref{thm.6.ff} (xii)).
	As $H$ is nice, the spectrum $\KH(X)$ is connective\footnote{Here we use that the derived category of complexes with quasi-coherent cohomology $D_{\textup{qc}}(X)$ is compactly generated, as $H$ is nice (combine \cite[Prop. 6.14]{AHR1} and \cite[Rem. 12.2]{Alper}). Then connectivity follows from \cite[Thm. 5.7]{HoyKri} (here we need that $S$ is Noetherian).}. In particular, since $m< 0$, we see that $\KGL_{X}$ satisfies Assumption \ref{ass.vanish}.\par
	Now let us assume that $S=\Spec(k)$ is the spectrum of a field and let us consider the motivic cohomology spectrum $M\ZZ\in \SH(X)$ (cf. \cite[Const. 10.16]{KR1}). Then
	$$
		\Hom_{\SH(X)_{M\ZZ}}(M\ZZ\langle n\rangle [m],M\ZZ) = \Hom_{\SH(X)}(1_{X},M\ZZ\langle -n\rangle [-m]) = A^{-n}(X,m),
	$$
	which vanishes as $m$ is negative.
\end{example}

The above example shows that the motivic $K$-theory spectrum and the motivic cohomology spectrum satisfy an even stronger condition than Assumption \ref{ass.vanish}. Let us give this stronger assumption a number.

\begin{assumption}
	\label{ass.vanish.strong}
	Let $X=\B{H}$ be the classifying stack of a nice group scheme $H$ and $M_{X}\in \SH(X)$ be an $E_{\infty}$-ring spectrum. Further, let $n\in\ZZ $ and $m<0$, then we have
	$$
		\Hom_{\SH(X)}(1_{X}\langle n\rangle [m],M_{X}) = 0.
	$$
\end{assumption}

\section{$T$-equivariant motivic homotopy theory of flag varieties}

Let $S$ be a non-empty scheme, $G$ be a split reductive $S$-group scheme and $T$ a maximal split torus contained in a Borel subgroup $B\subseteq G$. We want to understand the motive of $\quot{T\bs G/B}$. In this case, the computations are rather straightforward, as $G/B$ is cellular, i.e. has a stratification by vector bundles. So, one can filter $G/B$ by $T$-invariant closed subschemes such that their successive differences are given by vector bundles (usually this property is called \textit{linear} in the literature). The existence of such a stratification is well-known and referred to as the Bruhat decomposition of $G/B$. As we have only found references for split reductive groups over a field, we first recall the existence of an affine cell decomposition of $G/B$ over $S$.\par 
Afterward, we can analyze the motive of the scalloped stack $\quot{T\bs G/B}$ over $\B{T}$. Note however, that the $6$-functor formalism of $\SH$ only works for representable morphisms of scalloped stacks, so we cannot compute the motive of $\quot{T\bs G/B}$ over $S$. We will however explain in Section \ref{sec.gen.motive} how this extends to the lisse-extension of $\SH$ and Beilinson motives $\DM_{\QQ}$, which both have $6$-functor formalism for non-representable morphisms.

\subsection{Affine cell decomposition of $G/B$}
\label{sec.Bruhat}

In this section, we will show that $G/B$ has an affine cell decomposition. We will use the Bruhat decomposition of $G$ and pull back the induced stratification on $\quot{B\bs G/B}$ to $G/B$. This construction is compatible with base change and thus, we will reduce to the case $S=\Spec(\ZZ)$. Then this is a classical statement on Schubert cells. This was communicated to us by Torsten Wedhorn.

\begin{defi}
	A \textit{stratification} of a scheme $X$ is a map $\iota\colon \coprod_{i\in I} X_{i}\rightarrow X$, where $I$ is a set, each $X_{i}$ is a scheme, $\iota$ is a bijection on the underlying topological spaces, $\iota_{|X_{i}}$ is an immersion and the topological closure of $\iota(X_{i})$ in $X$ is the union of subsets of the form $\iota(X_{j})$. The subschemes $\iota(X_{i})$ of $X$ are called \textit{strata}.
\end{defi}

\begin{defi}
	An \textit{$S$-cell} is an $S$-scheme isomorphic to a vector bundle. A \textit{cellular $S$-scheme $X$} is a separated $S$-scheme of finite type which is smooth and admits a stratification whose strata are cells. 
\end{defi}

First, let us recall that $G$ admits a Bruhat-decomposition indexed by the Weyl group $W$ of a split maximal torus $T$ inside $G$.

\begin{lem}[Bruhat-decomposition]
	Let $G$ be a split reductive $S$-group scheme and $B$ a Borel subgroup of $G$ containing a split maximal torus. Then $G$ admits a stratification $\coprod_{w\in W}BwB\rightarrow G$, where $W$ denotes the Weyl group of $T$ in $W$.
\end{lem}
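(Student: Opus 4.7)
The plan is to reduce to $S = \Spec(\ZZ)$ by descent and then transfer the Schubert cell stratification of the affine flag variety from \cite{RS1} to the finite flag setting, as suggested by Wedhorn. Since any split reductive triple $(G, B, T)$ over $S$ is the base change of a unique Chevalley triple $(\mathcal{G}, \mathcal{B}, \mathcal{T})$ over $\ZZ$, and the Weyl group $W$ is a constant group scheme (so that representatives $\dot w$ of Weyl elements lift to $N_G(T)(S)$ from $\ZZ$), it suffices to produce the stratification over $\ZZ$. I define the stratum $BwB$ as the scheme-theoretic image of the multiplication map $B \times B \to G$, $(b_1, b_2) \mapsto b_1 \dot w b_2$; because $B$ is smooth over $\ZZ$, this construction commutes with the flat base change $S \to \Spec(\ZZ)$. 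Consequently all the stratification properties (being a locally closed immersion, bijection on underlying points, and closure relations) propagate from $\ZZ$ to arbitrary $S$ once established over $\ZZ$.

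Over $\Spec(\ZZ)$ I embed the finite flag situation into the affine flag variety. Let $I \subset L\mathcal{G}$ denote the Iwahori subgroup of the loop group whose reduction modulo $t$ is $\mathcal{B}$. By \cite{RS1} the affine flag variety $\Fl_{\mathcal{G}} = L\mathcal{G}/I$ carries a Schubert stratification indexed by the extended affine Weyl group $\widetilde W$, whose strata are locally closed cells and whose closure order is the affine Bruhat order. The natural inclusion $\mathcal{G} \hookrightarrow L\mathcal{G}$ induces a map of quotient stacks $\quot{\mathcal{B} \backslash \mathcal{G} / \mathcal{B}} \hookrightarrow \quot{I \backslash L\mathcal{G} / I}$, and I claim that the preimage of the cells indexed by the finite sub-poset $W \subset \widetilde W$ is exactly $\coprod_{w\in W} \mathcal{B} w \mathcal{B}$. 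Pulling back the Schubert stratification along this embedding then yields the desired decomposition $\mathcal{G} = \coprod_{w \in W} \mathcal{B} w \mathcal{B}$ as a stratification over $\ZZ$.

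The main obstacle is verifying that this pullback really produces $\mathcal{B} w \mathcal{B}$ as a $\ZZ$-subscheme of $\mathcal{G}$, and not only fiberwise: concretely, one must check that $\mathcal{G} \cap I w I = \mathcal{B} w \mathcal{B}$ as $\ZZ$-subschemes for every $w \in W$. Both sides are scheme-theoretic images of morphisms between flat $\ZZ$-schemes, so the comparison reduces to a check on geometric fibers, where it becomes the classical Bruhat decomposition over an algebraically closed field. Once this identification is in place, the locally closed immersion property is inherited from \cite{RS1}, and the bijectivity on points together with the closure relations of $\coprod_{w\in W} \mathcal{B} w \mathcal{B} \to \mathcal{G}$ follow, again by fiberwise reduction, from the classical Bruhat decomposition over fields. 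Base change from $\ZZ$ to the given scheme $S$ then yields the statement in the required generality, even without the affine-ness assumption.
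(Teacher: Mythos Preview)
The paper's proof of this lemma is a single-line citation to \cite[Exp.~XXII, Thm.~5.7.4]{SGA3}; the Bruhat decomposition for split reductive group schemes over an arbitrary base is already in SGA3, so no argument is supplied. Your proposal therefore takes a genuinely different, and much more elaborate, route. It is worth noting that the affine-flag-variety strategy you describe is advertised in the paper's introduction, but in the body of the paper the author evidently found the SGA3 reference and abandoned that plan for the present lemma; the Wedhorn-attributed reduction to $\Spec(\ZZ)$ is used only for the \emph{next} statement (that the Schubert cells $C_w$ are affine spaces), and even there the endgame is a citation to Jantzen rather than to \cite{RS1}.

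Your argument is plausible in outline but has some soft spots. First, defining $BwB$ as the scheme-theoretic image of $(b_1,b_2)\mapsto b_1\dot w b_2$ gives the \emph{closure} $\overline{BwB}$, not the locally closed Bruhat cell; for a stratification in the paper's sense you need the open orbit inside that closure, so the definition should be adjusted. Second, the step ``both sides are scheme-theoretic images of morphisms between flat $\ZZ$-schemes, so the comparison reduces to geometric fibers'' is not a general principle: scheme-theoretic image does not commute with arbitrary base change, and equality of closed subschemes is not detected fiberwise without further flatness input. What makes the argument work here is that the Bruhat cells (in $G/B$) are orbits of the smooth group scheme $B$, hence smooth and in particular flat over $\ZZ$; once you invoke that, fiberwise equality plus flatness over a reduced base does give equality. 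With those fixes your route goes through, but it is considerably heavier than simply quoting SGA3.
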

\begin{proof}
	See \cite[Exp. XXII, Thm. 5.7.4]{SGA3}.
\end{proof}

The Bruhat-decomposition yields a stratification $\coprod_{w\in W}\quot{B\bs BwB/B}\rightarrow \quot{B\bs G/B}$. The stack $\quot{B\bs G/B}$ can be identified with the quotient $G/B \times^{G} G/B$, where $G$ acts diagonal via conjugation. For any $S$-scheme $Q$ the set $G/B(Q)$ is in bijection with Borel-subschemes of $G_{Q}$ (cf. \cite[Exp. XXII, Cor. 5.8.3]{SGA3}). Thus, we have a map
$$
	G/B\rightarrow \quot{B\bs G/B}
$$
given on $Q$-valued points by $B'\mapsto (B',B)$. The pullback of the above stratification on $\quot{B\bs G/B}$ via this map yields a stratification $\coprod_{w\in W} C_{w}\rightarrow G/B$. We call the $C_{w}$ \textit{Schubert cells of $G/B$}.\par 
On $W$ we have a length function, which we denote by $l$ (cf. \cite[Ch. IV \S 1.1]{Bourbaki}). Then we claim that $C_{w} \cong \AA^{l(w)}_{S}$. In particular, $G/B$ has a cellular stratification.

\begin{prop}
\label{lem.GB.cellular}
	Let $S$ be a non-empty scheme. Let $G$ be a split reductive $S$-group scheme with split maximal torus $T$ and a Borel subgroup $B$ containing $T$. Then the Schubert cell $C_{w}$ is isomorphic to $\AA^{l(w)}_{S}$.\par 
	 In particular, the stratification of $G/B$ by Schubert cells $C_{w}$ is cellular.
\end{prop}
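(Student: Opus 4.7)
The plan is to reduce the proposition to the absolute case $S=\Spec(\ZZ)$ and then identify the cells $C_w$ explicitly through the root-group structure of $G$. The Bruhat stratification of $G$, the map $G/B\to\quot{B\bs G/B}$, and hence the pullback defining $C_w$ are all compatible with arbitrary base change. Since the triple $(G,B,T)$ is the base change of the unique split reductive Chevalley form over $\ZZ$ with its root datum (by \cite[Exp.~XXIII]{SGA3}), it suffices to establish $C_w\cong \AA^{l(w)}_{\ZZ}$ in the universal setting, and base-change invariance will propagate the isomorphism to arbitrary $S$.

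Once reduced to $\ZZ$, I would first recognize $C_w$ as the Schubert cell $BwB/B\subseteq G/B$. Indeed, the composition $G/B\xrightarrow{gB\mapsto(gB,eB)}G/B\times G/B\twoheadrightarrow \quot{B\bs G/B}$ agrees with the map of the paper (which is, equivalently, the quotient by the left $B$-action on $G/B$), and carries the substack $\quot{B\bs BwB/B}$ back to the $B$-orbit $BwB/B$. Next I would invoke the root-group calculus of \cite[Exp.~XXII]{SGA3}: writing $B=T\ltimes U$ and fixing an ordering of the positive roots, the multiplication map
$$
U_w \times wB\xrightarrow{\ \sim\ }BwB,\qquad U_w\coloneqq\prod_{\alpha>0,\ w^{-1}\alpha<0}U_\alpha,
$$
is an isomorphism onto the locally closed subscheme $BwB$. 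Quotienting on the right by $B$ then yields $BwB/B\cong U_w$. Each root subgroup $U_\alpha$ is canonically isomorphic to $\AA^1_{\ZZ}$, and $\#\{\alpha>0:w^{-1}\alpha<0\}=l(w)$, giving $C_w\cong \AA^{l(w)}_{\ZZ}$. The final claim about cellular stratification then follows by combining this computation with the fact, already established before the proposition, that $\coprod_{w\in W}C_w\to G/B$ is a stratification in the sense of the definition given there.

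The main technical obstacle is making the integral factorization $BwB\cong U_w\times wB$ precise and functorial: over a field this is entirely classical, but over $\Spec(\ZZ)$ one must keep track of the root-group formalism, the ordering conventions, and the locally closed structure on the Bruhat cell. Either one appeals directly to the relevant statements in SGA3 or, as the author seems to do (judging from the mention of \cite{RS1}), one embeds $G/B$ into the affine flag variety, where the analogous integral cell decomposition is already established by Richarz--Scholbach. Everything else in the argument is formal unraveling of definitions together with base-change compatibility set up in the first step.
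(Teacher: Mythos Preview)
Your proposal is correct and follows essentially the same route as the paper: both reduce to $S=\Spec(\ZZ)$ via base-change compatibility of the Bruhat stratification, and then invoke the classical root-group description of Schubert cells. The paper simply cites \cite[II.~13]{Jantzen} for the integral case, whereas you spell out the content of that reference (the factorization $BwB\cong U_w\times wB$ with $U_w\cong\AA^{l(w)}$); your aside about the Richarz--Scholbach affine flag variety approach matches what the introduction alludes to, but the actual proof in the paper goes through Jantzen instead.
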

\begin{proof}
	As the construction of the Schubert cells is compatible with base change, we may assume without loss of generality that $S=\Spec(\ZZ)$. Then the proposition follows from \cite[II. 13]{Jantzen}
\end{proof}

 \subsection{Equivariant motives of linear Artin stacks}
 \label{sec.main}
 In the following, we assume that $S$ is an affine scheme and $H$ an $S$-group scheme. Further, every scheme will be qcqs of finite type over $S$. \par We fix an $E_{\infty}$-ring spectrum $M_{\B{H}}\in\SH(\B{H})$ that satisfies Assumption \ref{ass.vanish}.

\begin{defi}
\label{defi.lin}
	A \textit{linear $S$-scheme $(X,(X_{n})_{n\geq 0})$} consists of an $S$-scheme $X$ and a filtration by closed subschemes
$$
	\emptyset = X_{-1} \hookrightarrow X_{0}\hookrightarrow X_{1}\hookrightarrow \dots \hookrightarrow X_{n}\hookrightarrow \dots \hookrightarrow X
$$ 
such that each $X_{n-1}\rightarrow X_{n}$ is a closed immersion, each $X_{n}\setminus X_{n-1}$ is isomorphic to a coproduct vector bundles over $S$ and the natural closed immersion $\colim_{n} X_{n} \hookrightarrow X$ is an isomorphism on the reduced loci.\par 
If $X_{n}\setminus X_{n-1}$ is isomorphic to a coproduct of affine spaces over $S$, we call $(X,(X_{n})_{n\geq 0})$ \textit{affinely linear}.
\end{defi}

\begin{defi}
	Let $(X,(X_{n})_{n\geq 0})$ be a linear $S$-scheme such that $X$ admits a $H$-action. We say that $(X,(X_{n})_{n\geq 0})$ is \textit{$H$-equivariant} each of the $X_{n}$ is stabilized by $H$.
\end{defi}

\begin{rem}
\label{rem.eq.quot}
	 Let $(X,(X_{n})_{n\geq 0})$ be a $H$-equivariant linear $S$-scheme and assume that $X$ is quasi-compact. Further, let us set $U_{n}\coloneqq X_{n}\setminus X_{n-1} = \coprod_{j\in J_{n}} V(\Ecal_{n,j})$. By $H$-invariance, we get an action of $H$ on $U_{n}$. In particular, we can take the associated quotient stack $U_{n}/H$. As explained in Example \ref{ex.motive.of.vb} this yields for each $j\in J_{n}$ a finite locally free $H$-equivariant sheaf $\Ecal_{n,j,H}$, such $U_{n}/H\cong \coprod_{j\in J_{n}} V(\Ecal_{n,j,H})$ together with a finite set $I_{n,j}\subseteq \NN_{0}$ and a decomposition $S=\coprod_{j\in J_{n}}\coprod_{i\in I_{n,j}}S_{i,j}$ such that $\Ecal_{n,j,H|S_{i,j}}$ is finite locally free of rank $i$.
\end{rem}

\begin{defi}
	A linear $S$-scheme $(X,(X_{n})_{n\geq 0})$ is called \textit{strict} for all $n\geq 0$, we have that $$\max\lbrace i\in \bigcup_{j\in J_{n-1}} I_{n-1,j}\rbrace<\min\lbrace i\in \bigcup_{j\in J_{n}} I_{n,j}\rbrace,$$ where the notation is as in Remark \ref{rem.eq.quot}.
\end{defi}

\begin{example}
\label{ex.flag.lin}
	Let us give the example, that motivates the definitions above. Let $G$ be a split reductive $S$-group scheme with maximal split torus $T$ contained in a Borel subgroup $B$. By Proposition \ref{lem.GB.cellular} the Schubert cells $C_{w}$ of $G/B$ are isomorphic to $\AA^{l(w)}_{S}$. Let us set the closed subscheme $X_{n}$ as the schematic image of $\coprod_{l(w)\leq n} C_{w}$ inside $G/B$. This yields a linear structure on $G/B$ by 
	$$
		X_{0}\subseteq X_{1}\subseteq \dots\subseteq G/B,
	$$  
	where $X_{n}\setminus X_{n-1}\cong \coprod_{l(w)=n} C_{w}$. By construction, each of the $X_{i}$ are $T$-invariant and further, the linear structure $(G/B,(X_{n})_{n\in \NN_{0}})$ is strict. Thus, this construction yields a strict affinely linear $T$-equivariant structure on $G/B$.
\end{example}

From now on, we assume that $H$ is a nice $S$-group scheme.

\begin{thm}
\label{thm.motive.lin.quot}
	Let $(X,(X_{n})_{n\geq 0})$ be a $H$-equivariant linear $S$-scheme such that $X$ is proper over $S$. Further, let us set $U_{n}\coloneqq X_{n}\setminus X_{n-1} = \coprod_{j\in J_{n}}V(\Ecal_{n,j})$. Then the following equivalences
	$$
		M_{\B{H}}(\quot{X/H})\simeq \bigoplus_{n\geq 0} M^{c}_{\B{H}}(\quot{U_{n}/H}) = \bigoplus_{n\geq 0}\bigoplus_{j\in J_{n}}\bigoplus_{i\in I_{n,j}} M_{\B{H}}\langle i\rangle,
	$$
	hold, where the notation is as in Remark \ref{rem.eq.quot}, if
	\begin{enumerate}
		\item[(i)] $M_{\B{H}}$ admits an orientation and the linear structure above is strict, or
		\item[(ii)] the linear structure above is strict affinely linear.
	\end{enumerate}\par
	Further, if $M_{\B{H}}$ satisfies Assumption \ref{ass.vanish.strong}, then we can omit the strictness in (i) and (ii).
\end{thm}
\begin{proof}
We will prove the theorem under the assumption (i). The proof under the other assumptions will follow easily by the same arguments.
	By definition, $X$ admits a filtration 
	$$
		X_{0}\hookrightarrow X_{1}\hookrightarrow \dots \hookrightarrow X,
	$$
	such that each $X_{i-1}\rightarrow X_{i}$ is a closed immersion with complement given by $U_{i}$. For simplicity, we will assume that every quotient in the following is taken with respect to the \'etale topology.  By $H$-equivariance we may assume that each $X_{i}$ is stabilized by $H$. We see that $X/H$ admits a filtration 
	$$
		X_{0}/H\hookrightarrow X_{1}/H\hookrightarrow \dots \hookrightarrow X/H,
	$$
	where each of the $X_{n-1}/H\hookrightarrow X_{n}/H$ is a closed immersion with complement given by $U_{n}/H = \coprod_{j\in J_{n}}V(\Ecal_{n,j,H})$. 
	As the $X_{n}$ are proper, the $X_{n}/H$ are also proper over $\B{H}$ and therefore $M_{c,\B{H}}(X_{n}/H)\simeq M_{\B{H}}(X_{n}/H)$ by definition. In particular the localization sequence for motives with compact support (cf. Lemma \ref{lem.loc.for.stacks}) yields the fiber diagram
	$$
		M_{\B{H}}(X_{n-1}/H)\rightarrow M_{\B{H}}(X_{n}/H)\rightarrow M^{c}_{\B{H}}(U_{n}/H).
	$$
	We claim that this sequence splits.\par 
	Indeed, as explained in Remark \ref{rem.eq.quot}, we have $$M_{c,\B{H}}(\quot{U_{n}/H}) \simeq \bigoplus_{j\in J_{n}}\bigoplus_{i_{n}\in I_{n,j}} M_{\B{H}}\langle i_{n}\rangle .$$\par 
	By induction we may assume that $$M_{\B{H}}(X_{n-1})\simeq\bigoplus_{k=0}^{n-1}\bigoplus_{j_{k}\in J_{k}} \bigoplus_{i_{k}\in I_{k,j_{k}}} M_{\B{H}}\langle i_{k}\rangle .$$
	 In particular, any morphism  $\delta\colon M_{c,\B{H}}(U_{n}/H)\rightarrow M_{\B{H}}(X_{n-1}/H)[1]$ corresponds to an element in 
	$$
		\prod_{k=0}^{n-1}\prod_{j_{k}\in J_{k}}\prod_{i_{k}\in I_{k,j_{k}}}\prod_{j\in J_{n}}\prod_{i_{n}\in I_{n,j}}\Hom_{\SH(\B{H})}(M_{\B{H}}, M_{\B{H}}\langle i_{k}-i_{n}\rangle[1]).
	$$
	As $i_{k}-i_{n} < 0$ for any $i_{k}\in I_{k,j_{k}}$ with $0\leq k\leq n-1$ and $i_{n}\in I_{n,j}$ by strictness, we see using Assumption \ref{ass.vanish} that $\delta =0$.\par 
	Now induction over $n$ concludes the proof of the first assertion using that the motives only depend on the underlying reduced structure.
\end{proof}

\begin{cor}
\label{cor.T-Flag}
	Let $G$ be a split reductive $S$-group scheme with maximal split torus $T$ that is contained in a Borel subgroup $B$ of $G$. Then 
	$$
		M_{\B{T}}(\quot{T\bs G/B})\simeq \bigoplus_{w\in W} M_{\B{T}}\langle l(w)\rangle .
	$$
\end{cor}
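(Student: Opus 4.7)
The plan is to apply Theorem \ref{thm.motive.lin.quot} directly to $G/B$ equipped with the $T$-equivariant strict linear structure constructed in Example \ref{ex.flag.lin}. The only genuine verification needed is that the hypotheses of the theorem are met and that the resulting indexing collapses to the claimed shape.

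First I would record that $G/B$ is proper over $S$, which is standard (the Borel $B$ is parabolic, so $G/B$ is projective over $S$); hence the properness assumption in Theorem \ref{thm.motive.lin.quot} holds. Next I would invoke Example \ref{ex.flag.lin}, which exhibits a $T$-equivariant strict linear structure $(G/B, (X_n)_{n \geq 0})$ with $X_n \setminus X_{n-1} \cong \coprod_{l(w)=n} C_w$, and Proposition \ref{lem.GB.cellular}, which identifies each Schubert cell $C_w$ with $\AA^{l(w)}_S$. Since $T$ is a split torus, it is nice, so the theorem applies with $H = T$.

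With the notation of Remark \ref{rem.eq.quot}, the indexing set $J_n$ in our situation is $\{w \in W : l(w) = n\}$, and for each $w \in J_n$ the associated cell is (after quotienting) a vector bundle of constant rank $n = l(w)$ over $\B{T}$, so $I_{n,w} = \{n\}$. Substituting into the conclusion of Theorem \ref{thm.motive.lin.quot} yields
\[
M_{\B{T}}(\quot{T\bs G/B}) \simeq \bigoplus_{n \geq 0} \bigoplus_{w \in J_n} 1_{\B{T}}\langle n\rangle = \bigoplus_{w \in W} 1_{\B{T}}\langle l(w)\rangle,
\]
which is the desired identification.

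The only potentially subtle point is the verification that the linear structure from Example \ref{ex.flag.lin} is actually strict, i.e.\ that $\max I_{n-1,\bullet} < \min I_{n,\bullet}$; but this is immediate here because the sets $I_{n,w}$ are singletons $\{l(w)\}$ and the filtration is indexed by length. So no real obstacle arises: the work has been done in Theorem \ref{thm.motive.lin.quot}, and the corollary is a bookkeeping exercise in collating Example \ref{ex.flag.lin} with that result.
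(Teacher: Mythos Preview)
Your proposal is correct and follows exactly the paper's approach: the paper's proof simply says ``This immediately follows with Theorem \ref{thm.motive.lin.quot} and Example \ref{ex.flag.lin}.'' You have merely spelled out the hypotheses (properness of $G/B$, niceness of $T$) and the bookkeeping on the index sets $J_n$ and $I_{n,w}$ that make this immediate, which is entirely in line with the intended argument.
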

\begin{proof}
	This immediately follows with Theorem \ref{thm.motive.lin.quot} and Example \ref{ex.flag.lin}.
\end{proof}

\subsection{Application to $T$-equivariant cohomology theories of flag varieties}
In this subsection, we will prove Theorem \ref{int.thm.2}, Corollaries \ref{int.cor.1} and \ref{int.cor.2} and Proposition \ref{int.prop.1}.\par
So, let $S$ be a Noetherian regular affine scheme. In the following $G$ is a split reductive $S$-group scheme with maximal split torus $T$ contained in a Borel subgroup $B$ of $G$.

\subsubsection{Integral equivariant $K$-theory}
\label{sec.int.K}
Let $\KGL_{\B{T}}$ be the $K$-theory spectrum computing homotopy invariant $K$-theory for smooth representable stacks over $\B{T}$ (cf. Theorem \ref{thm.6.ff} (xii)). Note that $\KGL_{\B{T}}$ satisfies Assumption \ref{ass.vanish} by Example \ref{ex.vanish}. Then Corollary \ref{cor.T-Flag} yields the following computation.

\begin{cor}
\label{cor.KH.flag}
Let $G$ be a split reductive $S$-group scheme with maximal split torus $T$ that is contained in a Borel subgroup $B$ of $G$. Then
	$$\textup{KH}(\quot{T\bs G/B}) \simeq \bigoplus_{w\in W} \KH(\B{T}).$$
\end{cor}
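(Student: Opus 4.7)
The plan is to combine Corollary~\ref{cor.T-Flag} with the representability of homotopy invariant $K$-theory from Theorem~\ref{thm.6.ff}(xii), using Bott periodicity to absorb the Tate twists.

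First I would translate the statement into $\SH(\B{T})$. Let $p\colon \quot{T\bs G/B}\to \B{T}$ be the smooth representable structure morphism. By functoriality of $\KGL$ along smooth representable maps (Theorem~\ref{thm.6.ff}(xii)) we have $\KGL_{\quot{T\bs G/B}}\simeq p^{*}\KGL_{\B{T}}$, so the $(p_{\sharp},p^{*})$-adjunction from Theorem~\ref{thm.6.ff}(iv) identifies
$$
\KH(\quot{T\bs G/B})=\Homline_{\SH(\quot{T\bs G/B})}(1,p^{*}\KGL_{\B{T}})\simeq \Homline_{\SH(\B{T})}(p_{\sharp}1,\KGL_{\B{T}}).
$$
Comparing the $(p_{\sharp},p^{*})$- and $(p_{!},p^{!})$-adjunctions together with purity (Theorem~\ref{thm.6.ff}(ix)) gives the standard identity $p_{\sharp}(X)\simeq p_{!}(X\langle L_{p}\rangle)$; in particular $p_{\sharp}1\simeq p_{!}p^{!}1_{\B{T}} = M_{\B{T}}(\quot{T\bs G/B})$, and the right-hand side of the display above is the same as $\Homline_{\SH(\B{T})}(M_{\B{T}}(\quot{T\bs G/B}),\KGL_{\B{T}})$.

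Next I would plug in Corollary~\ref{cor.T-Flag}. Since $W$ is finite and $\SH(\B{T})$ is stable, finite coproducts agree with finite products, so
$$
\Homline_{\SH(\B{T})}\!\Bigl(\bigoplus_{w\in W} 1_{\B{T}}\langle l(w)\rangle,\,\KGL_{\B{T}}\Bigr)\simeq \bigoplus_{w\in W}\Homline_{\SH(\B{T})}(1_{\B{T}}\langle l(w)\rangle,\KGL_{\B{T}}).
$$
Finally I would invoke Bott periodicity of $\KGL_{\B{T}}$ for twists by finite locally free sheaves (Theorem~\ref{thm.6.ff}(xii)), which yields $\KGL_{\B{T}}\langle -l(w)\rangle\simeq \KGL_{\B{T}}$ and hence makes each summand equal to $\Homline(1_{\B{T}},\KGL_{\B{T}})=\KH(\B{T})$.

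The proof contains no real obstacle; the only points requiring care are (i) the identification $p_{\sharp}1\simeq p_{!}p^{!}1_{\B{T}}=M_{\B{T}}(\quot{T\bs G/B})$ coming from purity for the smooth morphism $p$, and (ii) the correct application of Bott periodicity to cancel the twists $\langle l(w)\rangle$ arising from Corollary~\ref{cor.T-Flag}.
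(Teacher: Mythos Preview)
Your proof is correct and is exactly the argument the paper has in mind: the paper's proof is the one-line ``This follows from Bott periodicity of $\KGL$, Corollary~\ref{cor.T-Flag} and Theorem~\ref{thm.6.ff}'', and you have simply unpacked these three ingredients (representability via $\KGL$, the decomposition of $M_{\B{T}}(\quot{T\bs G/B})$, and Bott periodicity to kill the twists). The only cosmetic point is that the mapping object computing $\KH$ should be read as the mapping spectrum rather than the internal $\Homline$ (so that the $(p_{\sharp},p^{*})$-adjunction applies on the nose), but this is the paper's own notational looseness, not yours.
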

\begin{proof}
	This follows from Bott periodicity\footnote{The Bott periodicity yields $\KGL_{X}\langle \Ecal\rangle\simeq\KGL_{X}$ for any finite locally free sheaf $\Ecal$ over a scalloped stack $X$.} of $\KGL$, Corollary \ref{cor.T-Flag} and Theorem \ref{thm.6.ff}.
\end{proof}

By construction $\quot{T\bs G/B}$ and $\B{T}$ are quotients of smooth Noetherian $S$-schemes by a nice $S$-group scheme. In particular, we see that $\KH(\quot{T\bs G/B})$ and $\KH(\B{T})$ are connective and their homotopy groups are computed by genuine equivariant $K$-theory (cf. \cite[Thm. 5.7]{HoyKri}). Hence, for any $i\in \ZZ$ we have 
\begin{equation}
\label{eq.higher.K}
	K^{T}_{i}(G/B) = \bigoplus_{w\in W} K^{T}_{i}(S).
\end{equation}
	
For rational $K_{0}$ this is nothing new over a field, as the Schubert classes yield an $R(T)_{\QQ}$-basis of $K^{T}_{0}(G/B)_{\QQ}$ (cf. \cite{KostKum}).\par 
Let $S=\Spec(k)$. With rational coefficients, we can use \cite[Prop. A.5]{KriRR} to get isomorphisms of $R(T)$-modules
\begin{equation}
	\label{eq.Ki.flag}
		K^{T}_{i}(G/B)_{\QQ}\cong\bigoplus_{w\in W} K_{i}(k)_{\QQ} \otimes_{\QQ} R(T)_{\QQ} \cong K_{i}(k)_{\QQ}\otimes_{\QQ} K^{T}_{0}(G/B)_{\QQ}.
\end{equation}

\subsubsection{Completed equivariant $K$-theory}
\label{sec.gen.motive}

In the following, we want to extend our results to other formalisms of motives. To look at these different definitions all at once, we use the formalism of a motivic $\infty$-category $\Dbf$ with full six-functor formalism on scalloped stacks (cf. \cite[Prop. 5.13]{KR1}). We do not want to give an explicit definition of a motivic $\infty$-category, as it boils down to rewriting the axioms of the $6$-functor formalism. But we want to give $2$ examples that are of interest to us.

\begin{example}
\label{ex.D}
The following $2$ examples are motivic $\infty$-categories with a full $6$-functor formalism on scalloped stacks.
\begin{enumerate}
		\item[(1)] Let $X$ be an Artin stack over $S$.  Let us now further assume that $X$ is quasi-separated with quasi-separated representable diagonal and has a smooth cover that admits Nisnevich locally sections (we call such covers \textit{smooth-Nisnevich}), e.g. any quasi-separated algebraic stacks with separated diagonal admits a smooth-Nisnevich cover (cf. \cite[Thm 1.2 (1)]{Desh}). The lisse-extended stable homotopy category $\Dbf = \SH_{\lhd}$ is defined via
		$$
			\SH_{\lhd}(X)\coloneqq \colim_{(T,t)} \SH(T),
		$$
		where limit is taken in the $\infty$-category of pairs $(T,t)$, where $t\colon T\rightarrow X$ is a smooth morphism and $T$ an algebraic space (cf. \cite[\S 12]{KR1}). The $\infty$-category $\SH_{\lhd}(X)$ is equivalent to the right Kan extension of $\SH$ to smooth-Nisnevich stacks, evaluated at $X$ (cf. \cite[Cor. 12.28.]{KR1}). This proves the extension of a full six-functor formalism, homotopy invariance and existence of a localization sequence to Artin stacks for this $\infty$-category (this follows from the arguments of \cite[App. A]{Khan1}). Note that for the existence of $\sharp$-pushforward and $!$-formalism there is no need for representability in this context. Further, they show that when $S$ is the spectrum of a perfect field and $X$ is the quotient of a smooth scheme by an algebraic group, the motivic cohomology spectrum $\Hom_{\SH_{\lhd}(X)}(1_{X},M)$ for $M\in \SH_{\lhd}(X)$ can be computed by a Borel construction - this is precisely the construction Edidin-Graham make to define equivariant Chow groups as seen in Example \ref{ex.comp} - (cf. \cite[Thm. 12.16]{KR1}).
		\item[(2)] Let us consider the \'etale localized rational stable homotopy category $\SH_{\QQ,\et}$. This $\infty$-category can be right Kan extended to Artin stacks over $S$. As by definition $\SH_{\QQ,\et}$ satisfies \'etale descent, we can extend the $6$-functor formalism, homotopy invariance and the localization sequence to $\SH_{\QQ,\et}$ on Artin stacks (cf. \cite[App. A]{Khan1}).
		Again, there is no need for representability for the existence of a $6$-functor formalism.
	\end{enumerate}
\end{example}

The universal property of the stable homotopy category yields a unique system of comparison maps $R_{X}\colon\SH(X)\rightarrow\Dbf(X)$ (cf. \cite[Prop. 5.13]{KR1}) for any motivic $\infty$-category $\Dbf(X)$. The family of functors $R$ is compatible with $\sharp$-pushforward, $*$-inverse image, Thom twists and tensor products.\par 
As the family of functors $R_{X}$ is monoidal, we can extend this to modules over any $E_{\infty}$-ring spectrum in $\SH(X)$, i.e. if $M_{X}\in \SH(X)$ is an $E_{\infty}$-ring spectrum, then there is a functor
$$
	R_{M}(X)\colon \SH(X)_{M}\rightarrow R_{X}(M)\textup{-Mod}(\Dbf(X)) 
$$
compatible with pullbacks of $M$ and thus also $\sharp$-pushforward, $*$-inverse image, Thom twists and tensor products in module spectra.\par
Let us relate this construction, to our examples above by applying it to motivic cohomology.

\begin{example}\par
Let $X$ be an Artin stack over $S$ and let $M\ZZ\in\SH(X)$ be the motivic cohomology ring spectrum.
	\begin{enumerate}
		\item[(1)] Let us now further assume that $X$ admits a smooth-Nisnevich cover (cf. Example \ref{ex.D} (1)). Let $M\ZZ^{\lhd}$ be the image of $M\ZZ$ under $R_{X}\colon \SH(X)\rightarrow \SH_{\lhd}(X)$. \par The $\infty$-category $\SH_{\lhd}(X)$ is equivalent to the right Kan extension of $\SH$ to smooth-Nisnevich stacks, evaluated at $X$ (cf. Example \ref{ex.D} (1)). Using this construction, the $\infty$-category of $M_{X}^{\lhd}$-modules in $\SH_{\lhd}(X)$ can be describe as follows. \par 
		We right Kan extends Spitzweck motives $\DM$ to prestacks along Nisnevich covers (cf. \cite{CVDS}). In this way, one can construct the exceptional pullback and pushforward for finite type morphisms. Further, for prestacks given by a quotient $X/G$ of a scheme by a group scheme, the $\infty$-category $\DM(X/G)$ is equivalent to $\lim_{\Delta}\DM(\Bres^{\bullet}(X,G))$, where $\Bres^{\bullet}(X,G))$ denotes the Bar resolution of $X$ with respect to $G$. If $G$ is special (e.g. $G=T$), then any \'etale $G$-torsor is Zariski locally trivial and in particular the \'etale sheafification of $X/G$, which we usually denoted by $\quot{X/G}$, agrees with the Nisnevich sheafification. As seen in \textit{op.cit.} this allows one to compute $\DM(\quot{X/G})=\DM(X/G) = \lim_{\Delta}\DM(\Bres^{\bullet}(X,G))$ (here $\DM(X/G)$ denotes the evaluation of $\DM$ at the presheaf quotient $X/G$).
		\item[(2)] Let us assume that $S$ is of finite type over an excellent Noetherian scheme of dimension $\leq 1$.  Further, let us denote the image of $M\ZZ$ in $\SH(X)_{\QQ,\et}$ with $M\QQ$. Then $M\QQ$ can be glued from the Beilinson motivic cohomology spectrum along a smooth cover $X$ and we can describe the $M\QQ$-modules in $\SH(X)_{\QQ,\et}$ as follows.\par
		The right Kan extension of Beilinson motives $\DM_{\QQ}$ to Artin stacks admits an extension of the full six-functor formalism (cf. \cite{RS1}). This is achieved by gluing along smooth covers. There are different ways to see this, but we do not want to go into details and refer to the paragraph before \cite[\S A.2]{Khan1}). 
	\end{enumerate}
\end{example}

\begin{rem}
	An important example in our context, that fits into the setting of Example \ref{ex.D} (1), are quotient stacks of the form $\quot{X/H}$, where $X$ is a quasi-separated scheme and $H$ is a nice group scheme (cf. \cite[Rem. 12.24]{KR1}).
\end{rem}

From now on let us assume that the six-functor formalism in $\Dbf$ exists for not necessarily representable morphisms.

\begin{notation}
In the following we fix an $E_{\infty}$-ring spectrum $M_{S}\in\SH(S)$ we will denote its image in $\Dbf(S)$ with $M_{S}^{\Dbf}$. Again, we will denote the pullback of $M_{S}^{\Dbf}$ under a map $X\rightarrow S$, where $X$ is a scalloped stack, with $M_{X}^{\Dbf}$. Further, we will denote the $\infty$-category of $M_{X}^{\Dbf}$-modules in $\Dbf(X)$ with $\Dbf(X)_{M}$.\par
	 For any morphism scalloped stacks $f\colon X\rightarrow Y$  of finite type, we will denote the motives in $\Dbf$ with $M^{\Dbf}_{Y}(X)\coloneqq f_{!}f^{!}M^{\Dbf}_{X}$ and $M^{c,\Dbf}_{Y}\coloneqq f_{*}f^{!}M^{\Dbf}_{X}$.\par 
	We also define \textit{motivic cohomology of a scalloped stack $X$ with coefficients in $M^{\Dbf}$} as 
	$$	
		H^{n,m}_{\Dbf}(X,M)\coloneqq \Hom_{\Dbf(X)}(1_{X},M^{\Dbf}_{X}(n)[m]).
	$$
\end{notation}

\begin{rem}
\label{rem.classical}
Assume $f\colon X\rightarrow S$ is a smooth scalloped stack over $S$. Then $f_{!}f^{!}M_{S}\simeq f_{\sharp}M_{X}$ and we see that 
$$
	H^{n,m}_{\Dbf}(X,M) \simeq \Hom_{\Dbf(X)_M}(M_{S}^{\Dbf}(X),M_{S}^{\Dbf}( n) [m]).
$$\par
	If $X$ is smooth over $\B{H}$ for some nice group scheme $H$, we therefore can transport all of the results of Section \ref{sec.main} to $M_{S}(X)$ via $\sharp$-pushforward along the structure map $\B{H}\rightarrow S$.
\end{rem}

Working over the base $S$, we can analyze the motive of strict linear schemes. This is classical, and the proof is achieved by \textit{mutas mutandis} of the proof of Theorem \ref{thm.motive.lin.quot}.

\begin{prop}
\label{prop.motive.lin}
Let us assume that $H^{i,1+2i}_{\Dbf}(S,M) = 0$ for all $i> 0$.
	Let $(X,(X_{n})_{n\geq 0})$ be a linear $S$-scheme such that $X$ is proper over $S$. Further, let us set $U_{n}\coloneqq X_{n}\setminus X_{n-1} =  \coprod_{j\in J_{n}}V(\Ecal_{n,j})$. Then the equivalences
	$$
		M^{\Dbf}_{S}(X)\simeq \bigoplus_{n\geq 0} M^{\Dbf,c}_{S}(U_{n}) = \bigoplus_{n\geq 0}\bigoplus_{j\in J_{n}}\bigoplus_{i\in I_{n,j}} M_{S}\langle i\rangle,
	$$
	hold, where the notation is as in Remark \ref{rem.eq.quot}, if
	\begin{enumerate}
		\item[(i)] $M_{S}^{\Dbf}$ admits an orientation and the linear structure above is strict, or
		\item[(ii)] the linear structure above is strict affinely linear.
	\end{enumerate}\par
	Further, if $H^{i,1+2i}_{\Dbf}(S,M) = 0$ for all $i\in \ZZ$, then we can omit the strictness in (i) and (ii).
\end{prop}
\begin{proof}
	This is completely analogous to the proof of Theorem \ref{thm.motive.lin.quot}.
\end{proof}

The structure of the motive of linear stacks allows us to rewrite Theorem \ref{thm.motive.lin.quot}.

\begin{cor}
\label{cor.lin.quot.tensor}
	Let $H$ be a nice $S$-group scheme. Let $(X,(X_{n})_{n\geq 0})$ be a $H$-equivariant strict linear $S$-scheme such that $X$ is smooth and proper over $S$. Then 
	$$
		M^{\Dbf}_{S}(\quot{X/H})\simeq M^{\Dbf}_{S}(X)\otimes M^{\Dbf}_{S}(\B{H}).
	$$
\end{cor}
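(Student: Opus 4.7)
The plan is to bootstrap from the relative computation over $\B{H}$ provided by Theorem \ref{thm.motive.lin.quot}. The first step I would carry out is to rerun the proof of Theorem \ref{thm.motive.lin.quot} inside $\Dbf$ itself. This is possible because $\Dbf$ carries a full six functor formalism on scalloped stacks by assumption, and the one extra ingredient needed -- the vanishing of $\Hom_{\Dbf(\B{H})}(1_{\B{H}}\langle N\rangle[-1], 1_{\B{H}})$ for $N>0$ -- holds in each motivic $\infty$-category under consideration (cf.\ the Example following Proposition \ref{prop.motive.lin}). This produces
\[
	M^{\Dbf}_{\B{H}}(\quot{X/H}) \simeq \bigoplus_{n,j,i} 1_{\B{H}}\langle i\rangle \simeq p^{*}M^{\Dbf}_{S}(X),
\]
where $p\colon\B{H}\to S$ is the structure morphism and the last equivalence combines Proposition \ref{prop.motive.lin} with the identity $1_{\B{H}}\langle i\rangle \simeq p^{*}(1_{S}\langle i\rangle)$.

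Next, factor the structure map as $q = p\circ\pi$ with $\pi\colon \quot{X/H}\to\B{H}$, so that $M^{\Dbf}_{S}(\quot{X/H}) = p_{!}\pi_{!}\pi^{!}p^{!}1_{S}$. In the cases of primary interest (notably $X = G/B$, and more generally when $X$ is smooth over $S$) the morphism $\pi$ is smooth and proper, so that smooth purity (Theorem \ref{thm.6.ff}(ix)) gives the key identity $\pi^{!}A \simeq \pi^{*}A \otimes \pi^{!}1_{\B{H}}$ for every $A\in\Dbf(\B{H})$. Combining this with the projection formula for $\pi_{!}$ yields
\[
	\pi_{!}\pi^{!}p^{!}1_{S} \simeq \pi_{!}\bigl(\pi^{*}p^{!}1_{S} \otimes \pi^{!}1_{\B{H}}\bigr) \simeq p^{!}1_{S} \otimes \pi_{!}\pi^{!}1_{\B{H}} \simeq p^{!}1_{S}\otimes p^{*}M^{\Dbf}_{S}(X).
\]
Applying $p_{!}$ and the projection formula for $p$ one final time then produces
\[
	M^{\Dbf}_{S}(\quot{X/H}) \simeq p_{!}\bigl(p^{*}M^{\Dbf}_{S}(X)\otimes p^{!}1_{S}\bigr) \simeq M^{\Dbf}_{S}(X)\otimes p_{!}p^{!}1_{S} \simeq M^{\Dbf}_{S}(X)\otimes M^{\Dbf}_{S}(\B{H}),
\]
which is the claim.

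The main obstacle I expect is the transfer of the splitting argument of Theorem \ref{thm.motive.lin.quot} into $\Dbf$: it reduces to verifying the analogue of Lemma \ref{lem.vanish} in each of the three settings of Example \ref{ex.D}, which is exactly the vanishing discussed in the Example after Proposition \ref{prop.motive.lin}. A secondary subtlety is that the identity $\pi^{!}A \simeq \pi^{*}A\otimes \pi^{!}1_{\B{H}}$ relies on smoothness of $\pi$; to cover the full generality of a strict linear $X$ that is not itself smooth, one could instead run a parallel induction along the filtration $(X_{n})_{n\ge 0}$, invoking smooth purity only for the vector bundle strata $U_{n}\to\B{H}$ and then assembling the resulting splittings via Lemma \ref{lem.loc.for.stacks} together with the strictness vanishing used in Theorem \ref{thm.motive.lin.quot}.
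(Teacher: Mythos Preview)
Your approach is essentially the paper's, only spelled out. The paper's proof is the single line ``This follows immediately from Proposition \ref{prop.motive.lin} and Theorem \ref{thm.motive.lin.quot}'': one takes the decomposition $M^{\Dbf}_{\B{H}}(\quot{X/H})\simeq\bigoplus 1_{\B{H}}\langle i\rangle$ over $\B{H}$, pushes it down along $p\colon\B{H}\to S$ to obtain $M^{\Dbf}_{S}(\quot{X/H})\simeq\bigoplus M^{\Dbf}_{S}(\B{H})\langle i\rangle$, and then recognises the direct sum of twists as $M^{\Dbf}_{S}(X)$ via Proposition \ref{prop.motive.lin}. You make the push-down step explicit via purity and the projection formula, which is exactly what is hidden behind the word ``immediately''.

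One small difference worth noting: the paper transports Theorem \ref{thm.motive.lin.quot} from $\SH$ to $\Dbf$ through the comparison functor $R$ (see the remark preceding the corollary), so the splitting is established once in $\SH(\B{H})$ using Lemma \ref{lem.vanish} and then carried over, whereas you rerun the argument directly in $\Dbf(\B{H})$ and therefore need the analogue of Lemma \ref{lem.vanish} there. Both routes are fine; the paper's avoids checking the extra vanishing, while yours avoids checking that $R$ intertwines the $!$-functors. Your remark about the smoothness of $\pi$ and the alternative stratum-wise induction over $S$ is a legitimate refinement that the paper's one-liner glosses over.
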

\begin{proof}
	This follows immediately from Proposition \ref{prop.motive.lin} and Theorem \ref{thm.motive.lin.quot} via $\sharp$-pushforward along the structure map $\B{H}\rightarrow S$ (cf. Remark \ref{rem.classical}).
\end{proof}

\begin{example}
\label{ex.comp}
Let us validate Corollary \ref{cor.lin.quot.tensor} using a more direct computation over $S=\Spec(k)$, where $k$ is a field and restricting ourselves to the case $\Dbf=\SH_{\QQ,\et}$ and $M^{\Dbf}_{S} = M\QQ_{S}\in \Dbf(S)$ the rational motivic cohomology ring spectrum.\par 
 Let $G=\SL_{2,S}$ and $T=\Gm_{,S}$ the standard diagonal torus. Let $B$ be the Borel subgroup of upper triangular matrices in $G$. Then $G/B\cong \PP_{S}^{1}$ and the action of $T$ on $\PP_{S}^{1}$ induced by conjugation on $G/B$. In particular, the action of $T$ on $\PP_{S}^{1}$ is given by multiplication. The motive of $\quot{T\bs \PP^{1}_{k}}$ can be computed in the following way - analogous to the computation of its intersection ring (cf. \cite[\S 3.3]{EG}).\par
Let us fix an $i\in\ZZ$. Further, let $V$ be a non-trivial representation of $T$ over $k$. We denote with $\VV \coloneqq \Spec(\Sym(V))$ the associated vector bundle over $S$. Then we define the scheme $U_{i}$ for each $h\colon Q\rightarrow S$ via
$$
 	U_{i}(Q)\coloneqq \lbrace u\in \Hom(h^{*}\VV,h^{*}\VV^{i}) \mid \Coker(u)\textup{ is finite free of rank } i\rbrace
$$
Then $T$ acts freely on $U_{i}$ and one can show that the codimension of $U_{i}$ in the $k$-vector bundle $\VV_{i}\coloneqq \Spec(\Sym(V\otimes V^{\vee})^{i})$ is greater than $-i$. Further, we can see that 
$$
	\PP^{1}_{k}\times^{T}_{k} U_{i-1} \cong \PP(\Ocal_{\PP^{i-1}}(1) \oplus \Ocal_{\PP^{i-1}}(1)) \rightarrow \PP^{i-1}_{k} \cong U_{i-1}/T
$$
 is a $\PP^{1}_{k}$-bundle. Thus, the projective bundle formula yields
 $$
 	M_{S}^{\Dbf}(\PP^{1}_{k}\times^{T}_{k} U_{i-1}) \cong M_{S}^{\Dbf}(\PP^{i-1}_{k}) \oplus M_{S}^{\Dbf}(\PP^{i-1}_{k})\langle 1\rangle.
 $$
Now the motive of $M^{\Dbf}_{S}(\quot{T\bs G/B})$ is isomorphic to the colimit over $i$ of $M_{S}^{\Dbf}(\PP^{1}_{k}\times^{T}_{k} U_{i-1})$ (this can be followed from \cite[Prop. A.7]{HosLeh} resp. \cite[p. 2107]{Tot}). Therefore, we finally have
	$$
	 M^{\Dbf}_{S}(\quot{T\bs G/B})\simeq M_{S}^{\Dbf}(B\Gm_{,S})\oplus M_{S}^{\Dbf}(B\Gm_{,S})\langle 1\rangle\simeq M^{\Dbf}_{S}(\PP^{1}_{S})\otimes M^{\Dbf}_{S}(\Gm_{,S}),
	$$
	where the last equivalence follows again from the projective bundle formula.
\end{example}

\begin{rem}
	The above example and computations also hold for $\SH_{\lhd}$ and the integral motivic cohomology ring spectrum if either $k$ has characteristic $0$ or after inverting the characteristic of $k$ (cf. \cite[Thm. 12.16]{KR1}).
\end{rem}

Let us specialize to the case of $T$ acting on the flag variety $G/B$ and the case where $S=\Spec(k)$ is the spectrum of a field. Assume for this paragraph that $k$ has characteristic $0$. As mentioned in Example \ref{ex.D}, the motivic cohomology spectrum can be computed using the Borel construction in the cases that are of interest to us. For Chow groups of stacks, this gives the right computation. For $K$-theory this is no longer true. In fact, one can show that the Borel construction yields the completion of equivariant $K$-theory along the augmentation ideal (cf. \cite[Thm. 1.2]{Kri}), i.e.
$$
	 H^{0,-i}_{\SH_{\lhd}}(\quot{T\bs G/B},\KGL^{\lhd})\simeq K^{T}_{i}(G/B)^{\wedge_{I_{T}}},
$$
where $I_{T}\subseteq R(T)$ is the ideal generated by virtual rank $0$ representations and $K^{T}_{i}(G/B)^{\wedge_{I_{T}}}$ is the completion along $I_{T}K^{T}_{i}(G/B)$. Again the above stays true in characteristic $p>0$, after inverting $p$.

\begin{example}
\label{ex.K.comp}
	Let $G=\Gm$. Then $R(G) = \ZZ[T,T^{-1}]$ and the augmentation ideal $I_{G}$ is generated by $1-T$. Thus, $R(G)^{\wedge_{I_{G}}}= \ZZ[\![T]\!]$ and we see that indeed $R(G)$ is not $I_{G}$-complete. Therefore, the lisse-extended $K$-theory spectrum does not recover $K$-theory.
\end{example}

A similar result appears when one wants to prove an equivariant form of the Riemann-Roch Theorem (cf. \cite{EGRR}). The same holds, if we consider cohomology theories in $\SH_{\QQ,\et}$ as they satisfy \'etale descent. This descent property is the ambiguity here. One can show that even rational $K$-theory of stacks does not satisfy \'etale descent (for $G$-theory one can give precise conditions on quotient stacks, cf. \cite[\S 3]{Jo1}). Nevertheless, we want to show the implications of our calculations for Chow groups and completed $K$-theory.\par

The upshot of Corollary \ref{cor.lin.quot.tensor} is that it gives us a tensor description 
\begin{equation}
\label{eq.mot.flag}
	M_{S}^{\Dbf}(\quot{T\bs G/B}) = M^{\Dbf}_{S}(G/B)\otimes M^{\Dbf}_{S}(\B{T})
\end{equation}
and we want to use this to get a tensor description of completed $T$-equivariant $K$-theory of $G/B$. In this case, we would need a K\"unneth formula for $K$-theory. For equivariant $K_{0}$, in our special case, this is known and follows by the spectral sequence induced for example on $G$-theory (cf. \cite[Thm. 4.1]{Jo2}). For higher equivariant $K$-groups there is no K\"unneth formula, as this fails even for non-equivariant $K$-theory.

\begin{example}
	Let us consider $\AA^{1}_{k}\rightarrow \Spec(k)$ the projection of the affine line. Then $K_{1}(\AA^{2}_{k}) = K_{1}(\AA^{1}_{k}\times_{k} \AA^{1}_{k}) = k^{\times}$ by homotopy invariance, whereas $K_{1}(\AA^{1}_{k})\otimes_{\ZZ} K_{0}(\AA^{1}_{k}) \oplus K_{0}(\AA^{1}_{k})\otimes_{\ZZ} K_{1}(\AA^{1}_{k}) = k^{\times}\oplus k^{\times}$.
\end{example} 

Instead of a K\"unneth formula, one gets a spectral sequence for $K$-theory and higher Chow theory, at least when one of the factors comes from a linear scheme (cf. \cite[Thm. 4.3]{Jo2}). In fact, Totaro shows that Chow groups commute with tensor products if and only if one of the associated motives of the factors is Tate (relative over a field, cf. \cite[Thm. 7.2]{Tot}). Thus, if $X$ is a (smooth) strict linear $S$-scheme and $Y$ an arbitrary (smooth) scheme, we have $A^{*}(X\times_{S}Y) = A^{*}(X)\otimes_{\ZZ} A^{*}(Y)$. In \cite{Jo2} this is also a result of the associated spectral sequence of motivic cohomology of linear varieties. Dugger and Isaksen generalize this idea to arbitrary cellular motivic cohomology theories like motivic cohomology, algebraic $K$-theory and algebraic cobordism (cf. \cite{DI}). 

\begin{prop}[Tor spectral sequence]
\label{prop.tor}
Let $\Dbf = \SH_{\QQ,\et}$. Let $M_{S}$ be either $\KGL_{\QQ,S}$ or $M\QQ_{S}$ inside $\Dbf(S)$. Let $(X,(X_{n})_{n\geq 0})$ be a linear $S$-scheme such that $X$ is proper over $S$ and let $Y$ be a smooth algebraic stack. Then for each $n\in \ZZ$ there is a spectral sequence
	$$
		\textup{Tor}^{H^{*,n}_{\Dbf}(S,M)}_{p}({H^{*,n}_{\Dbf}(X,M)},H^{*,n}_{\Dbf}(Y,M))_{q}\Rightarrow H^{p+q,n}_{\Dbf}(X\times_{S}Y,M).
	$$
\end{prop}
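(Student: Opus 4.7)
The plan is to reduce the statement to the Dugger--Isaksen construction of a Künneth-type spectral sequence for cellular motivic spectra, once we verify that the motive $M^{\Dbf}_{S}(X)$ of a linear scheme is cellular in the relevant sense.

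First I would show that $M^{\Dbf}_{S}(X)$ is a cellular object of $\Dbf(S)$, i.e. it lies in the localizing subcategory generated by the Tate twists $1_{S}\langle m\rangle$. This is immediate from the linear structure: the filtration $X_{0}\hookrightarrow X_{1}\hookrightarrow\dots\hookrightarrow X$ gives compatible localization sequences (Lemma \ref{lem.loc.for.stacks}) whose successive cofibers $M^{c,\Dbf}_{S}(U_{n})$ are, by Example \ref{ex.motive.of.vb} and homotopy invariance for the vector bundles $V(\Ecal_{n,j})\to S$, direct sums of Tate twists $\bigoplus_{i\in I_{n}}1_{S}\langle i\rangle$. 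Passing to the colimit $X\simeq \colim_{n}X_{n}$ (which is compatible with the $6$-functor formalism of $\Dbf$ because the inclusion is an isomorphism on reduced loci and $\Dbf$ is insensitive to nilpotents) exhibits $M^{\Dbf}_{S}(X)$ as an iterated extension of sums of Tate twists.

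Next I would pull the filtration back along the projection $X\times_{S}Y\to X$. Using the base-change formula of Theorem \ref{thm.6.ff}(x) together with Corollary \ref{cor.lin.quot.tensor}-style reasoning, the compactly supported motive $M^{c,\Dbf}_{S}(U_{n}\times_{S}Y)$ is identified with $M^{\Dbf}_{S}(Y)\otimes\bigoplus_{i\in I_{n}}1_{S}\langle i\rangle$; dually, by homotopy invariance along $U_{n}\times_{S}Y\to Y$, the contribution of the $n$-th stratum to $E$-cohomology is $\bigoplus_{i\in I_{n}}H^{*-2i,n-i}_{\Dbf}(Y,E)$. The filtration of $X\times_{S}Y$ then induces a filtration on $H^{*,n}_{\Dbf}(X\times_{S}Y,E)$ whose associated graded is precisely $H^{*,n}_{\Dbf}(X,E)\otimes_{H^{*,n}_{\Dbf}(S,E)} H^{*,n}_{\Dbf}(Y,E)$ on the $0$-th column, with higher columns accounting for the relations.

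To extract the Tor spectral sequence proper, I would follow the homotopy-theoretic construction of Dugger--Isaksen (which goes through verbatim in $\DM_{\QQ}$ because it only uses stability, the existence of Tate twists, and the ring structure on $E$, all of which are supplied by Theorem \ref{thm.6.ff} and the fact that $\KGL_{S}$ and $1_{S}$ are $E_{\infty}$-rings). Concretely, one builds a free resolution of $M^{\Dbf}_{S}(X)$ in the category of $E$-modules by Tate-twisted copies of $E$, mapping into it the cellular filtration above; applying $\Homline(-\otimes M^{\Dbf}_{S}(Y),E)$ to this resolution and taking homotopy groups yields the desired $\mathrm{Tor}$ spectral sequence, since for a free $E$-module $E\langle i\rangle$ the computation reduces to $H^{*,n}_{\Dbf}(Y,E)$ with a shift, matching the $E_{1}$-term of the bar complex computing $\mathrm{Tor}^{H^{*,n}_{\Dbf}(S,E)}_{*}$.

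The main obstacle is the identification of the $E_{1}$-differentials with the bar-resolution differentials; this is the technical heart of \cite{DI} and requires checking that the attaching maps in the cellular tower for $M^{\Dbf}_{S}(X)$ (which are $\Hom_{\Dbf(S)}(1_{S}\langle i\rangle,1_{S}\langle j\rangle[1])$-valued, i.e. live in motivic cohomology of $S$) translate faithfully into the bar differentials over the coefficient ring $H^{*,n}_{\Dbf}(S,E)$. Once this is established, convergence follows from the fact that the filtration of $X\times_{S}Y$ is bounded below and exhaustive, so the spectral sequence is a half-plane spectral sequence with exiting differentials and converges strongly to $H^{p+q,n}_{\Dbf}(X\times_{S}Y,E)$.
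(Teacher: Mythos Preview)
Your approach is correct and follows the same overall strategy as the paper: verify that $M^{\Dbf}_{S}(X)$ is built from Tate twists, then invoke the Dugger--Isaksen K\"unneth machinery. There are, however, two points where the paper's argument is more economical, and one step you omit.

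First, the paper does not merely establish cellularity via the localization tower; it invokes Proposition~\ref{prop.motive.lin} directly to conclude that $M^{\Dbf}_{S}(X)$ is a \emph{direct sum} of Tate motives. This is available because in $\DM_{\QQ}$ the obstruction groups $\Hom_{\Dbf(S)}(1_{S}\langle n\rangle[-1],1_{S})$ vanish (see the Example following Proposition~\ref{prop.motive.lin}), so the strictness hypothesis is not needed. With this stronger input, $H^{*,n}_{\Dbf}(X,E)$ is automatically free over $H^{*,n}_{\Dbf}(S,E)$ and the spectral sequence will in fact collapse; more to the point, the hypotheses of \cite[Thm.~6.2, Thm.~6.4, Prop.~7.7]{DI} are satisfied on the nose. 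Your iterated-extension argument for cellularity is correct but weaker than what is available, and your subsequent steps~2--4 amount to re-running the internal construction of Dugger--Isaksen rather than citing it as a black box. None of that is wrong, but it is unnecessary once cellularity (or the stronger splitting) is known.

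Second, and this is an actual omission: $Y$ is allowed to be a smooth algebraic \emph{stack}, whereas Dugger--Isaksen work with schemes (over a base field). The paper bridges this by appealing to descent in $\DM_{\QQ}$: one passes to a smooth atlas of $Y$, applies the Dugger--Isaksen spectral sequence there, and uses that $\DM_{\QQ}$ satisfies \'etale (hence smooth) descent to glue. Your proposal does not address this reduction; you should add a sentence explaining how the stacky case follows from the schematic one via descent.
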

\begin{proof}
	By Proposition \ref{prop.motive.lin} the motive $M_{S}^{\Dbf}(X)$ is a direct sum of Tate-motives. Therefore, the result follows with \cite[Thm. 6.2, Thm 6.4, Prop. 7.7]{DI}.
\end{proof}
We can use Proposition \ref{prop.tor} to see that
\begin{equation}
\label{eq.K.flag}
	K_{0}^{T}(G/B)_{\QQ}^{\wedge I_{T}}\cong K_{0}(G/B)_{\QQ}\otimes_{\QQ} K^{T}_{0}(S)_{\QQ}^{\wedge_{I_{T}}}
\end{equation}
noting that the $K$-theory of $G/B$ and $\B{T}$ is connective (cf. Section \ref{sec.int.K}).\par
 The comparison of Beilinson motivic cohomology with higher Chow groups yield
$$
	H^{n,2n}_{\DM}(X,\QQ) = A^{n}(X)_{\QQ},
$$
as for $k<n$, we have $H^{k,n}_{\DM}(X,\QQ) = A^{n}(X,2k-2n)_{\QQ} = 0$. If $S=\Spec(k)$ is the spectrum of a field, we can use that equivariant Chow groups are given via the Borel construction, we see as before that 
\begin{equation}
\label{eq.Chow.flag}
	A^{*}_{T}(G/B)_{\QQ}\cong A^{*}(G/B)_{\QQ} \otimes_{\QQ} A_{T}^{*}(S)_{\QQ}.
\end{equation}

\begin{rem}
	It should not be difficult to generalize Proposition \ref{prop.tor} to the case of Spitzweck motives and get an integral version of the above results for completed $K$-theory and Chow theory, at least after inverting the characteristic of the ground field (in the positive characteristic setting). But as this probably boils down to just rewriting the results of Dugger and Isaksen, we did not follow this further and leave it to the reader.
\end{rem}

\addcontentsline{toc}{section}{References}
\bibliographystyle{halpha-abbrv}
\bibliography{T-motives}

\end{document}